\documentclass[a4paper, 12pt]{article}

\usepackage{filecontents}

\usepackage{amsmath,amssymb,amsfonts,setspace}
\usepackage{mathtools}
\usepackage{amsthm}
\usepackage{tikz}
\usepackage{pgfplots}
\pgfplotsset{compat=1.16}
\usepgfplotslibrary{groupplots}

\newcommand{\eq}{\begin{equation}}
\newcommand{\en}{\end{equation}}

\newcommand{\prob}{{\mathbb P}}
\newcommand{\me}{{\mathbb E}}

\newtheorem{theorem}{Theorem}

\newtheorem{lemma}{Lemma}

\newtheorem{rem}{Remark}
\newtheorem{prop}{Proposition}
\newtheorem{ex}{Example}

\def\endpf{\hfill $\Box$ \vskip0.5cm}
\def \proof{\noindent{\it Proof.\ }}
\DeclareMathOperator*{\argmax}{argmax} 

\newlength{\bibitemsep}
\newlength{\bibparskip}
\let\oldthebibliography\thebibliography
\renewcommand\thebibliography[1]{%
  \oldthebibliography{#1}%
  \setlength{\parskip}{\bibitemsep}%
  \setlength{\itemsep}{\bibparskip}%
}

\begin{document}

\title{The Memoryless Best-Choice Problem}

\author{Alexander Gnedin\thanks{Queen Mary, University of London} \and Marcos C. S. Carreira\thanks{A5X S.A., S\~ao Paulo, Brazil}}
\noindent

\maketitle

\begin{abstract}
\noindent
A random sequence sampled from a known continuous distribution is observed with the objective to choose an item with the overall rank one.
A rejected item cannot be recalled and is immediately erased from the memory.
Under this memory constraint, the choice problem is not amenable to recursive methods of optimal stopping and becomes a global optimisation 
task. We focus on a heavy-traffic form of the problem with infinitely many choice opportunities, which we state in terms of a planar Poisson process (PPP).
Symmetries of the PPP are used to derive basic structural properties of the optimal stopping rule, including
the balance at the boundary equation, and two key integral identities.
Throughout, we make thorough comparison to the classic full-information counterpart of the problem, revisiting  both  discrete- and continuous-time models.
The optimal value, stopping rule and other characteristics    of the problem are determined
analytically and  approximated numerically with high precision.
\end{abstract}

\section{Introduction}

Most models in sequential decision theory rely on the existence of a low-dimensional state variable that summarises the information relevant for future decisions. In the classical full-information best-choice problem of Gilbert and Mosteller \cite{GM} the running record plays this role. The decision maker observes independent offers from a known distribution and remembers the best offer seen so far. Future decisions depend only on the current record and the remaining horizon. See  \cite{GnedinSPA,  GStoch, GKS,  GMir, Kuchta, Samuels, TamakiO, TamakiM} for analytic aspects and variations of the problem.

Motivated by severe memory and information constraints encountered in practice, we introduce a different form of sequential choice. The model is intended as a token example whose primary role is to develop techniques and expose characteristic structural phenomena.
Suppose that a buyer faces an intense flow of 
offers over a fixed trading period. The offers are independent with known distribution and observed perfectly as they arrive, but information about  past  offers is not retained. The buyer cannot reconstruct relative ranks and has no access to past records. 
The objective is to maximise the probability that the accepted offer eventually turns out to have been the best opportunity appearing during the entire decision period.

Enns \cite{Enns} introduced a model with imperfect observations, where the only information obtained when an offer is inspected is whether its value lies above or below a threshold specified by the observer. The choice process terminates with the first offer crossing that threshold. We show that the memoryless buyer is led to exactly the same class of strategies. Thus perfect 
observation without memory and imperfect observation generate an equivalent optimisation problem. The threshold structure is not imposed, but emerges naturally from the information constraint itself.

The resulting optimisation problem differs substantially from the familiar best-choice setting and, more generally, from the standard framework of optimal stopping theory \cite{Ferguson, Peskir}. Since no statistic derived from past observations is sufficient, there is no finite-dimensional state analogous to the running record. Information accumulates only through the events that past offers have failed to cross the prescribed acceptance thresholds. In this sense the filtration is generated by the decision rule itself. The role of a state at a given time   is taken by the explored part of the offer space, determined by the thresholds specified in the past.

The main subject of the present paper is a continuous-time model obtained in a heavy-traffic limit, where  opportunities arrive by a planar Poisson process (PPP). A strategy is described by a time-dependent acceptance boundary. 
 In this representation the boundary acts as a controllable intensity: it determines the rate at which acceptable offers are encountered. The memoryless best-choice problem therefore acquires an interpretation familiar from intensity-based models in mathematical finance \cite{default}, although with the objective  typical  for optimal selection.

A similar loss of recursive  structure occurs in the unit-memory problem of Rubin and Samuels \cite{Rubin}, where the source distribution is unknown and a single observation can be stored in renewable memory. The parallel full-information model is Robbins' problem of minimising the rank \cite{Ester,GJAP};
in that case the complexity of the optimal strategy implies  that the most tractable class of policies consists of threshold rules.

The continuous-time formulation reveals a sharp contrast between the full-information and memoryless problems. In the full-information problem, self-similarity of the PPP leads to a hyperbolic threshold on record values. The memoryless problem leads to a much more complex boundary, appearing as the solution of a calculus of variations problem and exhibiting a logarithmic singularity near the horizon. We show that the most powerful identities arise not from dynamic programming but from path variations. One perturbs the underlying Poisson configuration (or, equivalently, the 
threshold curve
characterising the admissible region), computes the first-order change of the objective pathwise, and invokes optimality to force the variation to vanish.

The remainder of the paper is organised as follows. In Section 2 we re-examine the full-information and Enns problems in discrete time and establish the equivalence between imperfect observation and perfect observation without memory. We also obtain several structural identities for threshold policies and the running record,
and compare the memoryless and full-information settings. Section 3 introduces the PPP framework and develops the continuous-time formulation. The central Section 4 derives the variational optimality equation, studies its consequences, and investigates the optimal boundary.  A major novelty here is 
using the shift-invariance and self-similarity of the PPP to assess a class of variations on the stopping boundary, which 
formalises the Palm-type conditioning on arrival in interval or half-line.
Section 5 proceeds with the analysis of endpoint conditions and asymptotics.
Section 6 presents approximations and numerical results on the optimal boundary and calculations of the stopping value and other  essential constants.
Throughout the paper, numerical constants are truncated to the number of displayed digits.

\section{Best choice in discrete time}

Let $X_1,\ldots,X_n$ be independent random scores drawn from the  uniform distribution on $[0,1]$, with the sample minimum
$$M_n:=\min(X_1,\ldots,X_n).$$
The scores are observed sequentially until  exactly one of them is selected,
with the reward  for choosing score
 $X_k$ (for $k\in [n]:=\{1,\ldots,n\}$) set equal to the indicator ${\bf 1} (X_k=M_n)$.
The objective is to maximise the best choice probability,
that is to find the stopping value
$$
v_n:=\sup_{\tau\in{\mathcal M}_n} {\mathbb P}[X_\tau=M_n],
$$
of the class $\mathcal M_n$ of {\it memoryless} stopping rules with values in $[n]$ 
that satisfy  the measurability condition 
\begin{equation}\label{M-less}
\{\tau=k\}\in \sigma\{\tau>k-1, X_k\},~~~k\in[n].
\end{equation}
Equivalently, ranking the scores by permutation $R_1,\ldots,R_n$ of $[n]$, so that 
$$\{X_k=M_n\}=\{R_k=1\},$$
the objective is  to stop on the item with the total rank one. 


\begin{prop}\label{bor} Every memoryless stopping rule has the form
$$\tau=\min\{k\in [n]: X_k\in D_k\},$$
where $D_k\subset [0,1]$ are Borel sets. This representation is essentially unique if ${\mathbb P}[\tau=n]>0$, in which case  $D_n=[0,1]$.
\end{prop}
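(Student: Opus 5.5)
We argue by induction on $k$, using the measurability condition (\ref{M-less}) together with the elementary structure of $\sigma$-fields generated by an event and a random variable. The key observation is that $\sigma\{\tau>k-1,X_k\}$ is generated by the indicator ${\bf 1}(\tau>k-1)$ and by $X_k$. Hence every event in it has the form
$$\{\tau>k-1,\ X_k\in A_k\}\cup\{\tau\le k-1,\ X_k\in B_k\}$$
for Borel sets $A_k,B_k\subset[0,1]$. This follows from the Doob--Dynkin lemma applied to the map $\omega\mapsto({\bf 1}(\tau>k-1),X_k)$ into $\{0,1\}\times[0,1]$, whose Borel sets are exactly $\{1\}\times A\cup\{0\}\times B$.

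Applied to the event $\{\tau=k\}$, which is disjoint from $\{\tau\le k-1\}$, this forces $\{\tau=k\}\cap\{\tau\le k-1\}=\varnothing$. So we may discard $B_k$ and write
$$\{\tau=k\}=\{\tau>k-1,\ X_k\in D_k\},\qquad D_k:=A_k .$$
For $k=1$ the event $\{\tau>0\}$ is the whole space, so $\{\tau=1\}=\{X_1\in D_1\}$. Assume inductively that $\{\tau>k-1\}=\{X_1\notin D_1,\ldots,X_{k-1}\notin D_{k-1}\}$. Then
$$\{\tau=k\}=\{X_j\notin D_j,\ j<k,\ X_k\in D_k\},$$
and consequently $\{\tau>k\}=\{X_j\notin D_j,\ j\le k\}$. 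Since $\tau\in[n]$ almost surely, we obtain $\tau=\min\{k: X_k\in D_k\}$. The minimum is over a nonempty set because $\tau\le n$ almost surely; one may also simply enlarge $D_n$ to $[0,1]$ without changing $\tau$ off a null set. All identities hold up to null sets, which is the only point needing care.

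For essential uniqueness, suppose $D_k$ and $D'_k$ both represent $\tau$. On $\{\tau>k-1\}$ the two sets must agree up to the law of $X_k$, that is,
$$\prob[\tau>k-1,\ X_k\in D_k\triangle D'_k]=0 .$$
Now $\{\tau>k-1\}$ is determined by $X_1,\ldots,X_{k-1}$, so it is independent of $X_k$. Therefore, whenever $\prob[\tau>k-1]>0$, the set $D_k\triangle D'_k$ is Lebesgue-null. The hypothesis $\prob[\tau=n]>0$ gives $\prob[\tau>k-1]\ge\prob[\tau>n-1]>0$ for every $k$, so each $D_k$ is determined up to a null set. Finally, $\tau\le n$ forces $\{\tau>n-1\}\subset\{X_n\in D_n\}$, so $\prob[X_n\notin D_n]=0$ and $D_n=[0,1]$ up to a null set.

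The main obstacle is the first step: writing an event of $\sigma\{\tau>k-1,X_k\}$ in the product form above. This is routine via Doob--Dynkin, provided we handle the almost-sure qualifications consistently and note that independence of $X_k$ from the past is what makes "essentially unique" mean uniqueness up to Lebesgue-null sets.
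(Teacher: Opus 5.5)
Your proof is correct and is the induction on $k$ that the paper invokes; the paper's proof consists only of that remark, and you supply the details, including the essential-uniqueness step via the independence of $\{\tau>k-1\}\in\sigma(X_1,\ldots,X_{k-1})$ from $X_k$. One minor point: the product form of events in $\sigma\{\tau>k-1,X_k\}$ is just the definition of the $\sigma$-field generated by the random vector $(\mathbf{1}(\tau>k-1),X_k)$, so Doob--Dynkin is not actually needed.
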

\proof
This is easily argued by induction in $k$.
\endpf
\noindent
The decision sets $D_k$  censor out observations until the first point gets trapped.
The probability that this process   survives to stage $k$ is
  $${\mathbb P}[\tau>k-1]=  \prod_{j=1}^{k-1}(1-d_j),$$
where $d_k$ is the Lebesgue measure of $D_k$.

\begin{theorem}\label{thr} The stopping value $v_n$ is attained by a memoryless stopping rule of the form
$$
\tau=\min\{k\in [n]: X_k\leq d_k\},
$$
for some nondecreasing  sequence of thresholds $0<d_1\leq\cdots \leq d_{n-1}<d_n=1$.
\end{theorem}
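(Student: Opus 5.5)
By Proposition~\ref{bor} we may restrict to rules $\tau=\min\{k: X_k\in D_k\}$ with Borel sets $D_k$. The first step is an explicit formula for the success probability. Fix $k$ and condition on $X_k=x$. The event $\{\tau=k,\ X_k=M_n\}$ then requires three things: $x\in D_k$; for each $j<k$, $X_j\notin D_j$ and $X_j>x$; and for each $j>k$, $X_j>x$. By independence,
$$
{\mathbb P}[X_\tau=M_n]=\sum_{k=1}^n \int_{D_k}(1-x)^{n-k}\prod_{j<k}\lambda\bigl(D_j^c\cap(x,1]\bigr)\,dx ,
$$
where $\lambda$ is Lebesgue measure. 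This expresses the objective as a deterministic functional of the sets $D_1,\ldots,D_n$.

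The second step is a rearrangement argument showing that initial intervals are optimal. Replace every $D_k$ by $[0,d_k]$, where $d_k=\lambda(D_k)$.
\begin{itemize}
\item The factor $(1-x)^{n-k}\prod_{j<k}\lambda(D_j^c\cap(x,1])$ is nonincreasing in $x$.
\item For any set of measure $d_j$ we have $\lambda(D_j\cap(x,1])\ge (d_j-x)^+$, with equality for $D_j=[0,d_j]$. Hence $\lambda(D_j^c\cap(x,1])\le 1-\max(x,d_j)$, with equality for the interval.
\end{itemize}
The simultaneous replacement therefore increases every integrand pointwise. It also moves each domain of integration onto the initial segment where the integrand is largest. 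The value becomes
$$
V(d_1,\ldots,d_n)=\sum_k\int_0^{d_k}(1-x)^{n-k}\prod_{j<k}\bigl(1-\max(x,d_j)\bigr)\,dx .
$$
This is continuous on $[0,1]^n$, so a maximiser exists, and it is attained by a threshold rule.

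It remains to establish the constraints on the thresholds. Enlarging $d_n$ only increases the last term, so $d_n=1$. If $d_k=1$ for some $k<n$, the rule never reaches stage $n$; one checks that lowering $d_k$ slightly gives a first-order gain, so $d_k<1$ for $k<n$. Likewise $d_1=0$ can be improved by taking $d_1$ small and positive: stopping at $x\approx0$ wins with probability close to one.

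The main obstacle is the monotonicity $d_k\le d_{k+1}$. I would use an adjacent exchange argument. The factors $1-\max(x,d_j)$ enter the later terms symmetrically in $d_k,d_{k+1}$, and the survival product is symmetric too. So swapping $d_k$ and $d_{k+1}$ changes only terms $k$ and $k+1$. Suppose $a=d_k>b=d_{k+1}$, and write $P(x)=\prod_{j<k}(1-\max(x,d_j))$ and $m=n-k-1$. Swapping to the order $(b,a)$ changes the value by
$$
\bigl((1-b)^2-(1-a)^2\bigr)\int_0^b(1-x)^mP\,dx-b\int_b^a(1-x)^{m+1}P\,dx .
$$
I would first try to show that this, or the gain from the alternative modification $d_{k+1}\mapsto a$, is nonnegative. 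That would give a contradiction with strict optimality, or at least show that a nondecreasing optimiser exists.

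If direct sign control fails, I would switch to a stationarity argument. Compute $\partial V/\partial d_k$, which compares the stopping payoff at the boundary $x=d_k$ with the continuation payoff. Then show that the boundary equations force the sequence to be nondecreasing, using that the factor $(1-x)^{n-k}$ weakens as $k$ grows.
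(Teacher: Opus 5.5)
Your reduction to initial intervals is correct, and it takes a different route from the paper. You write an explicit success functional, use the bound $\lambda(D_j^c\cap(x,1])\le 1-\max(x,d_j)$, and rearrange onto $[0,d_k]$ for all stages at once. The paper instead replaces one $D_k$ at a time by a measure-preserving coupling and compares ranks stochastically. Your version is shorter and fully explicit. The paper's stochastic rank comparisons are what let its argument extend to nondecreasing rank-loss objectives (remark after the theorem), whereas your formula is tied to ${\mathbb P}[X_\tau=M_n]$.

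The gap is the monotonicity step. You call it the main obstacle but do not resolve it: you only propose to \emph{try} to sign a displayed expression, with a stationarity fallback. That expression is not the swap difference. For $n=2$, $k=1$ (so $m=0$, $P\equiv 1$) it equals $b(a-b)\bigl(1-\tfrac{a+b}{2}\bigr)$, while direct computation gives $V(b,a)-V(a,b)=b(a-b)$. The fallback is not an argument either. The first-order conditions couple all thresholds nonlocally, and you give no mechanism by which they would force an ordering.

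Within your own framework the step takes two lines. Set $a=d_k>b=d_{k+1}$, $P(x)=\prod_{j<k}(1-\max(x,d_j))$ and $m=n-k-1$. On $[0,b]$ the factor $1-\max(x,a)$ equals $1-a$, so in the order $(a,b)$ terms $k$ and $k+1$ sum to
\[
\int_0^a(1-x)^{m+1}P\,dx+(1-a)\int_0^b(1-x)^mP\,dx .
\]
In the order $(b,a)$ they sum to
\[
\int_0^b(1-x)^{m+1}P\,dx+\int_0^a(1-x)^m\bigl(1-\max(x,b)\bigr)P\,dx
=\int_0^a(1-x)^{m+1}P\,dx+(1-b)\int_0^b(1-x)^mP\,dx .
\]
The swap therefore gains exactly $(a-b)\int_0^b(1-x)^mP(x)\,dx\ge 0$. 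Successive adjacent swaps then turn any maximiser into a nondecreasing one, which is the paper's final step.

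Apply your boundary arguments to this sorted maximiser. If $k<n$ is the first index with $d_k=1$, then $d_{k+1}=1$, and the left derivative of $V$ in $d_k$ is $-\int_0^1(1-x)^{n-k-1}P(x)\,dx<0$. At $d_1=0$ the right derivative in $d_1$ equals $1$.
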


\proof  We first show that stopping rule $\tau$ with decision sets $D_1,\ldots,D_n$ 
can be improved  by  the stopping rule with thresholds $d_1,\ldots,d_n$, where $d_k$ is the Lebesgue measure of $D_k$.

Fix $k\in [n]$. In the first instance let $\tau'$ be the memoryless rule having the same stopping sets  as $\tau$ except $D_k'=[0,d_k]$.
We let $\tau$ operate on $X_1,\ldots,X_k,\ldots, X_n$ and $\tau'$ on $X_1,\ldots, X_k',\ldots,X_n$, 
where $X_k'=\varphi(X_k)$ for  $\varphi $
the (essentially unique) measure-preserving transformation of $[0,1]$
 that maps
monotonically $D_k$ onto $[0,d_k]$ and  maps monotonically $[0,1]\setminus D_k$ onto $[d_k,1]$.   By this coupling we have $\tau=\tau'$.

Denote by $R$  the rank of $X_\tau$ in $X_1,\ldots, X_k,\ldots,X_n$,
and $R'$ the rank of the $\tau'$th score
  in $X_1,\ldots, X_k',\ldots,X_n$.
On the event $\{\tau<k\}$ the second stopped variable is equal to $X_\tau$, and $R$ has the same conditional distribution as $R'$.  On the event 
 $\{\tau=k\}$ we have $X_k\geq X_k'$ by construction, whence $R\geq R'$.
On the event  $\{\tau>k\}$ the stopped variables again coincide, but since $X_k'> X_k$
the rank of every subsequent score in the first sequence cannot be larger than that in the second, thus 
$R\geq R'$.
Thus   $R'\leq_{\rm st}R$, hence  the probability that $\tau'$ stops at the minimum of its sequence is not smaller than ${\mathbb P}[X_\tau=M_n]$.
Proceeding by induction in $k=1,\ldots,n$
we gradually replace each stopping set by an interval adjacent to $0$, each time preserving or improving the success probability.

Now, within the class of memoryless rules, the existence of optimum follows by  continuity  of  the success probability
 viewed as a function of thresholds.

It remains to prove the threshold monotonicity.
Fix $k$ and suppose $d_k=a$, $d_{k+1}=b$ with $a>b$.
Let $\tau(a,b)$ and $\tau(b,a)$ denote the rules that differ only by swapping
these two thresholds.
Define
\[
E(a,b)=\{X_k\le a\}\cup\{X_k>a,\,X_{k+1}\le b\},
\]
and introduce the selected score on this event,
\[
Y(a,b)=X_k\,\mathbf{1}(X_k\le a)
       +X_{k+1}\,\mathbf{1}(X_k>a,\,X_{k+1}\le b).
\]
Thus $Y(a,b)$ is the score chosen by $\tau(a,b)$ whenever
$\tau(a,b)\in\{k,k+1\}$; similarly define $Y(b,a)$.
Observe that
\[
{\mathbb P}[E(a,b)]
= a+(1-a)b
= b+(1-b)a
= {\mathbb P}[E(b,a)].
\]
A direct computation shows that for every $c\in(0,1)$,
\[
{\mathbb P}[Y(a,b)>c]
>
{\mathbb P}[Y(b,a)>c],
\]
so $Y(a,b)$ is strictly stochastically larger than $Y(b,a)$.

We compare the  ranks of chosen scores under the two strategies. On the events
$\{\tau\le k-1\}$ and $\{\tau>k+1\}$ the two rules coincide, hence
$
R(\tau(a,b)) = R(\tau(b,a)).
$
On the event $E(a,b)$ the selected scores are $Y(a,b)$ and $Y(b,a)$.
Conditioning on all other scores,  we obtain that  the rank is an increasing function of
the selected score. Therefore,
$
R(\tau(a,b)) \ge_{\mathrm{st}} R(\tau(b,a)),
$
that is,
\[
{\mathbb P}[R(\tau(a,b)) > r]
\;\ge\;
{\mathbb P}[R(\tau(b,a)) > r]
\quad \text{for all } r,
\]
with strict inequality for some $r$ when $a>b$.

We see that swapping $(a,b)$ to $(b,a)$ decreases the total rank stochastically,
and in particular does 
not decrease the success probability.
By successive pairwise exchanges, the thresholds may be arranged
into a nondecreasing sequence without loss of optimality.
\endpf

\noindent
The theorem establishes an equivalence between the original Enns problem with imperfect observations \cite{Enns}, where the observables are the indicator variables ${\bf 1}(X_k \le d_k)$ for pre-specified thresholds, and the Sleeping Beauty best-choice problem with perfectly observable scores but no memory. By the second interpretation, adopted in this paper, the threshold form is {\it intrinsic} to the optimisation under the no-memory constraint.

\begin{rem}{\rm  The threshold monotonicity argument we used  
can be applied to the  stopping problem ${\mathbb E}[q(R_\tau)]\to\inf$ with arbitrary nondecreasing function
of the total rank.   A computational proof specifically for the Robbins problem with $q(r)=r$ is found in 
 \cite{Ester}.
}
\end{rem}
\noindent
In the sequel we consider only nondecreasing threshold sequences, sometimes referring to $k\mapsto d_k$ as `threshold curve' or `stopping boundary'.
An optimal stopping rule in ${\mathcal M}_n$ of the threshold form will be denoted $\tau_n$, thus $v_n={\mathbb P}[X_{\tau_n}=M_n]$.

We will evaluate a memoryless (ML) rule $\tau$ with given thresholds $(d_k)$ together with its full-information (FI) counterpart $\widehat{\tau}$.
To that end we introduce
{\it the running  minimum}, 
$$M_k:=\min(X_1,\ldots, X_k), ~k\in [n],$$
which is a nondecreasing process having downward jumps each time score $X_k$ is a (lower) record.
The FI threshold rule  corresponding to $\tau$ is defined as
$$\widehat{\tau}:=\min\{k\in[n]: X_k\leq d_k, X_k=M_k\},$$
 with the convention that $\widehat{\tau}=n$ if the stopping condition is never satisfied. 
We further define 
the  {\it passage time}
$$\mu:=\min\{k\in[n-1]: M_k\leq d_{k+1}\},$$
which is the stage when the running minimum crosses the threshold curve.
Neither $\widehat{\tau}$ nor $\mu$ belong to ${\mathcal M}_n$.  By construction, 
$$\mu\leq\tau\leq \widehat{\tau},$$
where the first inequality follows from the opposite directions of monotonicity of the running minimum and the threshold curve.

Following \cite{GKS} we distinguish two passage events:
\begin{itemize}
\item[(i)] Passing by {\it jump}, $X_\mu\leq d_{\mu}$. In this case $X_\mu=M_\mu$ is a record. Since $\mu=\tau=\widehat{\tau}$,
both the  FI and ML rules have the same chance to win.
\item[(ii)] Passing by {\it drift}, $X_\mu> d_{\mu}$. 
In this case $\mu<\tau\leq\widehat{\tau}$.
 If   $X_\mu\leq d_{\mu+1}$ then $X_\mu$ is a record,  and if $X_\mu>d_{\mu+1}$ it is not;
but in any situation $X_\mu$ falls above the acceptance threshold.
After time $\mu$  the FI rule  becomes myopic, stopping at the first score below $X_\mu$ (or accepting $X_n$ at the last stage).
\end{itemize}

Accordingly,
the passage point has distribution on $[n-1]\times[0,1]$ given by
$${\mathbb P}[\mu=k, M_k\in(x,x+{\rm d}x]]=\begin{cases}   (1-d_k)^{k-1}{\rm d}x, ~~~0\leq x\leq d_k,\\
k(1-x)^{k-1}{\rm d}x,~~~d_k<x\leq d_{k+1}.
\end{cases}
$$
The probability that the first score falling below $x\in [0,1]$ is the overall minimum is 
\begin{eqnarray}\nonumber
S(n,x):=\sum_{j=1}^n {n\choose j}\frac{1}{j} \,x^j (1-x)^{n-j}=\\ \label{Snx}
\sum_{k=1}^n \frac{(1-x)^{k-1}-(1-x)^n}{n-k+1},
\label{single}
\end{eqnarray}
as computed by conditioning either on the number of scores below $x$, or on the index of the first such score.

With these preliminaries, the success probability of the FI rule  in the  case of a jump passage is
\begin{equation}\label{jump}
{\mathbb P}[X_{\widehat{\tau}}=M_n, X_\mu\leq d_{\mu}]=\sum_{k=1}^{n-1}   (1-d_k)^{k-1} \int_0^{d_k} (1-x)^{n-k} {\rm d}x,
\end{equation}
and in the case of a drift passage it is 
\begin{equation}\label{drift}
{\mathbb P}[X_{\widehat{\tau}}=M_n, X_\mu> d_{\mu}]=\sum_{k=1}^{n-1}    \int_{d_k}^{d_{k+1}}k (1-x)^{k-1}S(n-k,x) {\rm d}x,
\end{equation}
so ${\mathbb P}[X_{\widehat{\tau}}=M_n]$ is the sum of (\ref{jump}) and (\ref{drift}). Integration gives explicit formulas, which 
simplify for the optimal FI rule, see
 \cite{GKS} for details.

We turn to the success probability with memoryless $\tau$.
The strict inequality $\tau<\widehat{\tau}$ only holds in the drift passage case. If the passage occurs at time $\mu=k$ at some level $M_k=x\in (d_k,d_{k+1}]$ then 
the inequality further requires that $\tau$ stops (unsuccessfully) above $x$ at some later stage,  in which case the outcomes of the FI and ML rules can only be different if $\widehat{\tau}$ wins at 
still later stage. Integrating out the passage point and a subsequent stopping point of $\tau$, we  
evaluate  the advantage of the FI rule as
\begin{eqnarray}\nonumber 
\Delta&:=&{\mathbb P}[X_{\widehat{\tau}}=M_n]-{\mathbb P}[X_{{\tau}}=M_n]
\\
\label{adv}
&=&\sum_{k=1}^{n-1}  \sum_{j=k+1}^{n-1} \prod_{\ell=k+1}^{j-1}(1-d_\ell)         \int\limits_{d_k}^{d_{k+1}}(d_j-x)S(n-j,x)k (1-x)^{k-1}{\rm d}x.
\end{eqnarray}
From this, 
$$
{\mathbb P}[X_\tau=M_n]={\mathbb P}[X_{\widehat{\tau}}=M_n]-\Delta={\rm (\ref{jump})+(\ref{drift})-(\ref{adv})},
$$ 
which leads upon evaluation of integrals 
to  polynomial formulas in thresholds. 
See \cite{Enns} for  alternative formulas obtained by the direct step-wise decomposition 
$$
{\mathbb P}[X_\tau=M_n]=\sum_{k=1}^n {\mathbb P}[\tau=k, R_n=1],
$$
and \cite{GM,  GKS, TamakiM} for their FI counterparts.

\begin{ex}{\rm In the case $n=3$  the polynomials become: the jump passage term

\[
{\mathbb P}[X_{\widehat{\tau}}=M_n, X_\mu\leq d_{\mu}]=
\frac{1-(1-d_1)^3}{3}
+
\frac{2d_2-3d_2^2+d_2^3}{2},
\]
the drift passage term
\[
{\mathbb P}[X_{\widehat{\tau}}=M_n, X_\mu> d_{\mu}]
=
\frac{1}{3}
-
d_1^2
+
\frac{1}{2}d_1^3
+
\frac{1}{6}d_2^3,
\]
and the penalty term by memoryless choice
\[
\Delta=
\frac{d_1^3}{3}
-
\frac{d_1^2 d_2}{2}
+
\frac{d_2^3}{6}.
\]

Optimising the thresholds we obtain 

\begin{eqnarray*}
\text{FI:}\quad
d_1 &=& 0.3101,\quad d_2 = \frac{1}{2},\qquad\quad
\widehat{v}_3 = 0.6842\\
\text{ML:}\quad
d_1 &=& 0.3274,\quad d_2 =0.4545,\quad
v_3 = 0.6798
\end{eqnarray*}
Using the FI rule on the ML-optimal thresholds improves the success probability by $\Delta$ about $0.0029$, thus achieving $0.6828.$
The other way round, if  the ML rule is used on the FI-optimal thresholds, the success probability drops to   $0.6775$.

}
\end{ex}

There is much confusion in the literature around the concept of single-level stopping rules, as highlighted in \cite{Malinovsky}.
This concerns the option of skipping the last observation $X_n$ and terminating without choice, which in our context would mean $d_n<1$.  
In the discrete-time setting we exclude the no-stop option, to preserve an important structural identity.

\begin{ex}{\rm 
Fix level $x\in (0,1)$. In both FI and ML contexts we define a single-level stopping rule as
$$
{\tau}_x:=\begin{cases}
\min\{k\in [n]: X_k\leq x\},\\
n,{\rm ~~if~~no~~such~~}k.
\end{cases}
$$
Let $K$ be the number of scores $X_1,\ldots,X_n$ that do not exceed $x$,  so  $K\stackrel{d}{=} {\rm Binomial}(n,x)$.
By exchangeability, conditioning on the total number of acceptable scores $K$ yields
\begin{align}\label{Mx}
{\mathbb P}\left[ X_{\tau_x} = M_n \right] &= {\mathbb E}\left[ \frac{1}{K} \mathbf{1}(K>0) \right]+\frac{1}{n}\,{\mathbb P}[K=0],\\
\label{taux}
{\mathbb E}\tau_x &=  {\mathbb E}\left[ \frac{n+1}{K+1}\,\mathbf{1}(K>0) \right]+n\, {\mathbb P}[K=0],
\end{align}
where we  accounted for the event of no choice $\{K=0\}$. Explicitly, 
\begin{eqnarray*}
{\mathbb P}\left[ X_{\tau_x} = M_n \right]&=&
\sum_{k=1}^n{n\choose k}\frac{1}{k}x^k(1-x)^{n-k} +\frac{(1-x)^n}{n}\\
&=&\sum_{k=1}^n\frac{(1-x)^{k-1}-(1-x)^n}{n-k+1} +\frac{(1-x)^n}{n},
\end{eqnarray*}
where the second expansion is over the values of $\tau_x$.  For the mean stopping time the parallel formulas are
\begin{eqnarray*}
{\mathbb E}\tau_x&=&\sum_{k=1}^n {n\choose k}\frac{n+1}{k+1}x^k(1-x)^{n-k}+n(1-x)^n\\
&=&\frac{1-(1-x)^{n}}{x}.
\end{eqnarray*}
From this a key identity can be shown: for the optimal level $x^*$
\begin{equation}\label{TE1}
{\mathbb E}\tau_{x^*}= n{\mathbb P}\left[ X_{\tau_{x^*}} = M_n  \right].
\end{equation}
Viewed in terms of (\ref{Mx}), (\ref{taux}) this is a nice property of the binomial distribution.

The setting where stopping above pre-defined level $x$ is prohibited 
(also at the last stage $n$)
may be regarded as Sleeping Beauty choice without watch. With the time
factor ignored,  all options including the last become exchangeable, which forces one to lift the threshold. For instance, in the $n=5$ case 
$x^*= 0.2518$ gives the success probability  $0.6079$, while watchless choice is optimal with level $0.2886$ succeeding with probability  $0.5666$. 
The watchless  version was introduced in \cite{GM} (p. 56). For large $n$ the convention about the last stage becomes negligible.

}
\end{ex}
\begin{rem}{\rm  In the FI problem a decision to stop at a record $X_k$ depends on time through the number of remaining steps $n-k$;
this leads to a single sequence of thresholds. In contrast, in the ML problem the thresholds depend in a more complex way on both $n$ and $k$. 
}
\end{rem}

Let $\widehat{\tau}_n, \widehat{v}_n$ denote the FI optimal stopping rule and value. For $n>2$ we have $\widehat{v}_n>v_n$,
and the threshold curves for $\widehat{\tau}_n$ and $\tau_n$ are different.

\begin{prop} The sequences $(v_n), (\widehat{v}_n)$ are strictly decreasing.
\end{prop}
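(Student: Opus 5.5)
The plan is a coupling argument: I will show $v_{n+1}\le v_n$ by turning any rule for $n+1$ observations into a rule for $n$ observations that does at least as well, and then upgrade to strict inequality. The same argument will be written once for memoryless rules and then checked to go through verbatim for full-information rules.

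\emph{Construction.} Fix an optimal rule $\tau_{n+1}$ with thresholds $d_1\le\cdots\le d_n<d_{n+1}=1$, as given by Theorem \ref{thr}. Take scores $X_1,\ldots,X_{n+1}$ and erase one item, chosen uniformly at random with index $J$, independent of the scores. An observer who sees the remaining $n$ scores can simulate the $(n+1)$-rule as follows. When the erased position $J$ is reached, the observer generates a fictitious uniform score $U$ and applies the corresponding threshold to it. If $U$ is accepted, the observer does not stop; instead he switches to accepting the next real score. This defines a randomised memoryless rule for $n$ items. Since a randomised rule is a mixture of deterministic ones, its success probability is at most $v_n$.

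\emph{Comparison, and the main obstacle.} The work lies in comparing the two success probabilities. A cleaner variant is to let the $n$-observer treat the missing $(n+1)$st item as a hypothetical last score. He runs $\tau_{n+1}$ on $X_1,\ldots,X_n$ with thresholds $d_1,\ldots,d_{n-1}$ and then forces $d_n'=1$ at stage $n$. The events where $\tau_{n+1}$ stops at some $k\le n$ and wins must then be compared:
\begin{itemize}
\item winning against $n+1$ scores implies winning against the first $n$ scores;
\item the only loss is that $\tau_{n+1}$ may reach stage $n+1$ and win there.
\end{itemize}
The losing part is covered because forcing $d_n=1$ gains the event $\{X_n>d_n,\ \tau_{n+1}>n-1,\ X_n<\min_{j<n}X_j\}$. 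The required inequality is therefore
\[
{\mathbb P}[\tau>n-1]\,\frac{1}{n+1}\;\le\;{\mathbb P}[\tau>n-1,\,X_n>d_n,\,X_n=M_n]\;+\;\text{(extra wins at stages $\le n$)}.
\]
The extra wins come from items that beat all of the first $n$ scores but lose to $X_{n+1}$. Controlling this balance is the real obstacle, since the boundary term depends on $d_n$. I would handle it by symmetry: by exchangeability of $X_n$ and $X_{n+1}$ given survival to stage $n-1$, the probability that the last of the $n+1$ scores is the minimum among those surviving coincides with the corresponding quantity for $X_n$ on $\{X_n>d_n\}$, up to the term gained when $X_n\le d_n$. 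For the merged stage I would use the explicit two-stage computation from the proof of Theorem \ref{thr}, namely the events $E(a,b)$ with $a=d_n$ and $b=1$. This is a one-variable inequality in $d_n$ and the survival level, which I expect to check directly.

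\emph{Strictness and the FI case.} Strictness comes from the positive-probability event that $X_{n+1}$ is the overall minimum while $\tau_{n+1}$ stops earlier on a score that is the minimum of the first $n$. On this event the $n$-rule wins and the $(n+1)$-rule loses. Its probability is positive because $d_1>0$, so for example $\{X_1\le d_1,\ X_{n+1}<X_1<\min_{2\le j\le n}X_j\}$ occurs with positive probability. For $\widehat v_n$ the same merging argument applies with $\widehat\tau$ in place of $\tau$, since the FI rule also depends on the last stage only through accepting everything there. Alternatively one can quote the known monotonicity for the FI problem from \cite{GM, Samuels}.
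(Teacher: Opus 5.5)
\textbf{There is a genuine gap.} Your argument rests on one inequality, and it is never proved: the truncated rule $\tau'$, with thresholds $(d_1,\dots,d_{n-1},1)$ on $n$ scores, must succeed with probability at least $v_{n+1}$. The route you sketch for it cannot work.

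The exact difference is $A+B-C$, where
\begin{itemize}
\item $A={\mathbb P}[\tau_{n+1}\le n,\ X_{n+1}<X_{\tau_{n+1}}=\min_{j\le n}X_j]$,
\item $B={\mathbb P}[\tau_{n+1}>n-1,\ X_n>d_n,\ X_n<\min_{j<n}X_j]$,
\item $C={\mathbb P}[\tau_{n+1}>n-1,\ X_n>d_n,\ X_{n+1}<\min_{j\le n}X_j]$.
\end{itemize}
Your displayed ${\mathbb P}[\tau>n-1]/(n+1)$ is not $C$.

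Now condition on $X_1,\dots,X_{n-1}$ on the event $\{\tau_{n+1}>n-1\}$, and let $m=\min_{j<n}X_j$. Then $\tau'$ wins with probability $m$. The rule $\tau_{n+1}$ wins at stages $n,n+1$ with probability $\int_0^{m\wedge d_n}(1-x)\,{\rm d}x+\int_{d_n}^1(m\wedge x)\,{\rm d}x$. For $m\le d_n$ this equals $m+m(1-d_n-m/2)$, which exceeds $m$ when $m$ is small.

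So the merged-stage comparison goes the wrong way, and no one-variable inequality in $d_n$ and the survival level can settle it. The deficit must be paid by the part of $A$ coming from stops at stages $\le n-1$, which is a global balance over all thresholds.

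That balance is not structural either. Three scores with thresholds $(0,\tfrac12,1)$ give success $25/48$, while the truncated two-score rule $(0,1)$ gives only $1/2$. So optimality of $(d_k)$ would have to be used quantitatively. Even then it is delicate: for large $n$, both $A$ and $C$ are of exact order $1/n$, while $A+B-C\le v_n-v_{n+1}=o(1/n)$.

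Your strictness argument has the same flaw. Exhibiting one event on which $\tau'$ wins and $\tau_{n+1}$ loses proves nothing while the event counted by $C$ goes the other way. The FI case inherits all of this.

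\textbf{The missing idea is to discard the worst score instead of the last one.} This is the paper's coupling. Tell the $n$-observer in advance the position and value $Y$ of the rank-$n$ score.
\begin{itemize}
\item Rejecting that item costs nothing, since it can never be the best.
\item Given this information, the other $n-1$ scores divided by $Y$ are i.i.d.\ uniform.
\item So the informed problem is exactly the $(n-1)$-problem, with value $v_{n-1}$.
\item Extra information cannot hurt, so $v_n\le v_{n-1}$.
\end{itemize}
In your simulation language, the fictitious inserted item must be the maximum: $Y\sim{\rm Beta}(n+1,1)$, placed at a uniform position, with the real scores multiplied by $Y$. Then every win of the simulated $(n+1)$-rule is also a win for the $n$-observer, and there is no balance to check.

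Strictness comes from the fact that $\tau_n$ stops on the known worst item with positive probability, e.g.\ at stage $n$ when $X_n$ is the maximum. The informed observer avoids this by accepting at stage $n-1$ instead. The same argument works verbatim for FI rules.
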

\proof  This is concluded by a well known coupling (see  \cite{Ester}, Theorem 2.4):
 if the worst, i.e. rank $n$, score is known a priori, always rejecting it makes the problem equivalent to  the choice from $n-1$ scores.

\endpf
\noindent
Some stopping values $v_n$ and $\widehat{v}_n$ are tabulated in \cite{Enns} and \cite{GM}, respectively. 
Two key identities
\begin{equation}\label{tau-v}
{\mathbb E}\tau_n=n v_n, ~~~{\mathbb E}\widehat{\tau}_n=n \widehat{v}_n, 
\end{equation}
were observed in \cite{Enns}  and \cite{TamakiM}, respectively.

The information available at stage $k$ in the ML problem is the score $X_k$ and that stopping has not occurred before. The dynamic programming (DP) principle dictates to compare, at each stage $k$
prior to termination,
the benefit  from immediate stopping versus the optimal continuation, hence   an optimal memoryless rule $\tau_n$  must satisfy
$$
 {\mathbb P}[X_{\tau_n}=M_n|\tau_n>k-1, X_k]\geq \sup_{\tau\in{\mathcal M}_n, \tau> k}  {\mathbb P}[X_\tau=M_n|\tau_n>k-1, X_k],
$$
on the event $\{\tau_n=k\}$, and the opposite inequality on the event $\{\tau_n>k\}$.
But since the survival event depends on the rule itself this does not allow a backward recursive calculation of $\tau_n$ and the continuation value.
The DP principle reduces  to  the balance at the boundary condition for the optimal thresholds,
\begin{equation}\label{discBaIni}
{\mathbb P}[X_k=M_n|\tau_n>k-1, X_k=d_k]= {\mathbb P}[X_{\tau_n}=M_n|\tau_n>k, X_k=d_k],
\end{equation}
where  the RHS is to be understood as the outcome of rejecting $X_k=d_k$ and continuing with thresholds $d_{k+1},\ldots,d_n$;
but this is equivalent to writing down the success probability as a function of thresholds and setting the partial derivatives equal zero.

\section{The PPP  framework}

Let $\Pi$ be a planar Poisson point process (PPP) with unit rate in $[0,1]\times[0,\infty)$. We prefer to view $\Pi$ as a random point scatter, rather than a counting measure, hence will use the set-theoretic notation.
The generic atom $(t,x)\in\Pi$ is interpreted as a score $x$ observed at time $t$;   with the total rank defined to be the cardinality of $\Pi \cap( [0,1]\times[0,x))$ plus $1$. 
The number of observations is infinite within any positive subinterval of $[0,1]$, i.e. the temporal arrival rate is infinite. 
An atom $(t,x)$ is regarded as a record if $\Pi\cap ([0,t)\times [0,x))=\varnothing$.

We consider stopping rules which select an atom of $\Pi$ or make no choice. Formally, we define a {\it stopping point} $(\tau,\xi)\in \Pi\cup\{(1,\infty)\}$ to be a random atom of the PPP, or the terminal state $(1,\infty)$, such that
$$\{\tau\leq t\}\in \sigma (\Pi\cap([0,t]\times[0,\infty))),~~~t\in [0,1].$$
A stopping point is completely determined by $\tau$ (or $\xi$), because the probability that  two atoms share the same $t$- or $x$-component is zero.
To stress the connection we may sometimes write $\xi_\tau$ for the score component of the stopping point.

The task is to maximise, within a specified class of stopping rules, the probability of stopping at the overall minimal score, which is  the atom  $(T,M)\in\Pi$ with total rank $1$, hence the last record.
The random variables $T$ and $M$ are independent, with $M$ being  unit exponential and $T$ uniform on $[0,1]$.
We can express the optimisation objective simply as
$${\mathbb P}[\tau=T] ~~{\rm ~or, equivalently, as~~}  {\mathbb P}[\xi_\tau=M] $$
because $(T,M)$ is almost surely the unique atom with the arrival time $T$.
See \cite{Rue} for a survey of stopping problems in the PPP framework with local optimisation objectives.

We will consider
nondecreasing   c{\'a}dl{\'a}g threshold curves  $f:[0,1)\to [0,\infty)$. 
Writing formulas we will sometimes presume   for simplicity of exposition  
that $f$ is differentiable with $f'>0$; in the general case this
requires  replacing $f'{\rm d}t$ by the Stieltjes differential ${\rm d}f$,
 in particular for the case of piecewise constant $f$.

A memoryless stopping rule is identified with 
a stopping point $(\tau,\xi)\in \Pi$, defined as the first arrival in the subgraph of the curve, which is  the planar domain 
$$
\{(t,x)\in [0,1]\times[0,\infty) : x\leq f(t)\}.
$$ 
We set $(\tau,\xi)=(1,\infty)$ on the event that such a point does not exist. 
The function $f$ appears as the survival rate for $\tau$. Indeed, denoting the primitive function
$$
F(t):=\int_0^t f(s){\rm d}s,
$$
we have 
\begin{equation}\label{MLtaup}
{\mathbb P}[\tau>t]=e^{-F(t)}, ~{\rm for~}t\in[0,1), ~~~{\mathbb P}[\tau=1]=e^{-F(1)}.
\end{equation}
The bivariate density  of the stopping point on the event $\{\tau<1\}$ is
$$
{\mathbb P}[\tau\in{\rm d}t,\xi\in{\rm d}x]=e^{-F(t)}{\bf 1}(x\leq f(t)){\rm d}t{\rm d}x.
$$
Stopping is viewed as successful if $(\tau,\xi)=(T,M)$, so we define the probability of the best choice as
\begin{equation}\label{crit-ML}
J(f):={\mathbb P}[\tau=T].
\end{equation}
We denote $v:=\sup J(f)$, and $f^*,\tau^*$ the threshold curve and the stopping rule achieving the ML supremum.

The full-information stopping rule associated with $f$ is identified with a stopping point $(\widehat{\tau},\widehat{\xi})\in \Pi$, defined as the {\it first record} in the subgraph of the curve. Accordingly, 
the FI best-choice probability is 
\begin{equation}\label{crit-FI}
\widehat{J}(f):={\mathbb P}[\widehat{\tau}=T].
\end{equation}
Formulas (\ref{MLtaup}) are not applicable in the FI case, as they do not account for the requirement that a stopping point must be a record.
In fact,  by monotonicity,
$${\mathbb P}[\widehat{\tau}=1]={\mathbb P}[M>f(T)]>0,$$
because  if the subgraph of $f$ contains records, one of them is $(T,M)$.
Rephrasing the FI stopping objective: (\ref{crit-FI}) is the probability that the overall minimum point $(T,M)$ and the last record before time $T$ are separated by the threshold curve $f$.
We denote $\widehat{v}:=\sup {\widehat J}(f)$, and ${\widehat f}^*,\widehat{\tau}^*$ the threshold curve and the stopping rule achieving the FI supremum.

There are two useful  ways to couple the infinite PPP problem with models where the number of observations is finite.
\begin{itemize}
\item[(i)]  Restrict to strategies that are only allowed to stop at a score below fixed level $x$.
By re-scaling this restriction is equivalent to a model where choice opportunities occur according to a Poisson process on $[0,x]$, and each  item is characterised by
 an independent  mark uniformly distributed on $[0,1]$. Conditioning on  $\#\Pi \cap( [0,1]\times[0,x])=n$ reduces to the problem with fixed number of observations. 
\item[(ii)]
Divide the time range $[0,1]$ in equal slots $[(k-1)/n, k/n], k\in[n]$,  and suppose the  observer at each time $t\in [(k-1)/n, k/n]$ has a complete foresight of the scores arriving within this slot.
With this advantage it is  sufficient to observe the minimal score within  each time slot,
which reduces the problem  to the best choice with  $n$ steps, 
where the scores are i.i.d. with ${\rm Exponential}(1/n)$ distribution. By a logarithmic probability transform we are  back to the setting with $n$ uniform scores.
\end{itemize}

The embedding (ii) gives a tool to prove that $v_n\to v, \widehat{v}_n\to \widehat{v}$ and to  justify the PPP counterparts of Proposition \ref{bor} and Theorem \ref{thr} leading to the class of stopping rules
with monotone thresholds.
The details can be found in \cite{GJAP, GKS}.

\subsection{Key identities}
The advantage of the infinite PPP setting  over discrete-time or finite poissonised models
arises from  the invariance of  stopping problems   under various transforms. 
In this section   we employ the vertical shifts.  

 Consider an arbitrary stopping rule $\tau$. 
For $\delta>0$ let $\tau'$ be a copy of $\tau$ acting on the PPP above level $\delta$ (hence independent of the arrivals below $\delta$ that are ignored), 
and let $\rho$ be the arrival time of the leftmost atom of $\Pi$ below $\delta$ if such atom exists or  $\rho=1$ otherwise. 
The stopping rule of the form
\begin{equation}\label{Bot-ext}
\tau^\delta:=\tau'\wedge \rho
\end{equation}
is an instance of a {\it $\delta$-extension} of $\tau$, obtained by a perturbation 
of the PPP preserving a stopping problem.

\begin{theorem}\label{InT} For arbitrary stopping rule $\tau$, the $\delta$-extensions 
{\rm (\ref{Bot-ext})}
satisfy
\begin{equation}\label{InThr}
\frac{\rm d}{{\rm d} \delta}\,{\mathbb P}[\tau^\delta=T]{\big |}_{\delta=0}={\mathbb E}\tau -{\mathbb P}[\tau=T].
\end{equation}
\end{theorem}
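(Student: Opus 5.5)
We prove (\ref{InThr}) by an exact first-order expansion of ${\mathbb P}[\tau^\delta=T]$ in $\delta$, conditioning on the configuration of $\Pi$ in the strip $[0,1]\times[0,\delta]$. By the independence property of the PPP, the atoms below $\delta$ form a Poisson process with intensity $\delta$, independent of the atoms above $\delta$. Moreover, the atoms above $\delta$, shifted down by $\delta$, form a copy of $\Pi$. This is the shift invariance we exploit. So $\tau'$ operating on the shifted configuration has the same joint law with its own ``overall minimum'' $(T',M')$ as $\tau$ has with $(T,M)$. We split according to $N:=\#\Pi\cap([0,1]\times[0,\delta])$: ${\mathbb P}[N=0]=e^{-\delta}=1-\delta+O(\delta^2)$, ${\mathbb P}[N=1]=\delta+O(\delta^2)$, and ${\mathbb P}[N\ge2]=O(\delta^2)$.

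On $\{N=0\}$ we have $\rho=1$, so $\tau^\delta=\tau'$ unless $\tau'$ would otherwise stop at time $1$ without choice. The overall minimum of $\Pi$ is the lowest atom above $\delta$, namely $(T',M'+\delta)$. Hence the success event is exactly $\{\tau'=T'\}$, of probability ${\mathbb P}[\tau=T]$. This contributes $(1-\delta){\mathbb P}[\tau=T]+O(\delta^2)$.

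On $\{N=1\}$ there is a single atom $(\rho,\eta)$ below $\delta$, with $\rho$ uniform on $[0,1]$ independent of everything above $\delta$, and this atom is the overall minimum. The rule $\tau^\delta=\tau'\wedge\rho$ succeeds iff it stops at this atom, i.e.\ iff $\tau'>\rho$, or more precisely iff $\tau'$ has not stopped strictly before $\rho$. By independence and the shift invariance, this has probability ${\mathbb P}[\tau>U]$ with $U$ uniform and independent of $\tau$, that is ${\mathbb E}\tau=\int_0^1{\mathbb P}[\tau>u]\,{\rm d}u$. Here we use the convention $\tau=1$ for no choice, which matches $\rho=1$ when no atom exists; the tie $\tau'=\rho$ has probability zero. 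This contributes $\delta\,{\mathbb E}\tau+O(\delta^2)$. The event $\{N\ge 2\}$ contributes at most ${\mathbb P}[N\ge2]=O(\delta^2)$.

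Summing the three contributions gives ${\mathbb P}[\tau^\delta=T]={\mathbb P}[\tau=T]+\delta({\mathbb E}\tau-{\mathbb P}[\tau=T])+O(\delta^2)$, from which (\ref{InThr}) follows, and in fact as a two-sided derivative at $0^+$. The main point needing care is the first case: we must argue rigorously that the law of $(\tau',T')$ computed from the shifted configuration coincides with that of $(\tau,T)$ for the original $\Pi$. This requires that $\tau'$ is defined as ``$\tau$ applied to $\{(t,x-\delta):(t,x)\in\Pi,\ x>\delta\}$'', so that adaptedness to the filtration is preserved. It also requires that the winning condition $\{\tau'=T'\}$ is unaffected by the presence or absence of atoms below $\delta$, apart from the cases already isolated in the other two terms.
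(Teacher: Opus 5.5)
Your proposal is correct and is essentially the paper's argument. The paper also compares $\tau^\delta$ with $\tau'$ according to whether an atom falls below $\delta$, using $\rho=T$ up to $o(\delta)$ when one does, to get ${\mathbb P}[\tau^\delta=T]=\delta\int_0^1{\mathbb P}[\tau'>t]\,{\rm d}t+(1-\delta){\mathbb P}[\tau'=T]+o(\delta)$ and then $\tau'\stackrel{d}{=}\tau$; your split by $N\in\{0,1,\ge 2\}$ only makes the error term and the shift-invariance step more explicit (and your caveat about $\tau'$ stopping without choice on $\{N=0\}$ is unnecessary, since both rules then fail).
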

\proof
We compare  $\tau^\delta$ and $\tau'$ for small $\delta$,
noting first that  $\rho<1$ implies $\rho=T$ up to an event of probability $o(\delta)$.
On the event  $\{\rho=1\}$,   we have  $\tau'=\tau^\delta$. 
On the event $\{\rho<\tau'\}$ the choice of $\tau^\delta$ is successful, and  on the event  $\{\tau'<\rho<1\}$ the rules coincide and both miss $T$.
Therefore integrating out $\rho$,
$$
{\mathbb P}[\tau^\delta=T]=\delta \int_0^1 {\mathbb P}[\tau'>t]{\rm d}t+(1-\delta){\mathbb P}[\tau'=T]+o(\delta),
$$
which upon re-arranging and using $\tau'\stackrel{d}{=}\tau$ gives the formula.
\endpf
\noindent
The relation  (\ref{InThr}) yields a criterion of optimality  within the one-parameter family of $\delta$-extensions of a given $\tau$. 
In essence, (\ref{InThr}) is a Palm-type formula assessing the impact of an arrival at level $0$.

A threshold curve $f$ is uniquely representable as a shift
$$
f(t)=f_0(t)+b,
$$
where $f_0(0)=0$ and $b:=f(0)$ is the {\it initial threshold}. The arrivals below the initial threshold occur according to a Poisson process of rate $b$, and the first such 
arrival has relative rank $1$, hence is acceptable by both ML and FI stopping rules associated with $f$.

In the FI case the initial threshold $b={\widehat f}^*(0)$ satisfies $e^{-b}=g_0(b)$, which is the boundary fitting equation
obtained by conditioning on the arrival $(0,b)$ at the threshold. In this situation   the optimal rule $\widehat{\tau}^*$ is indifferent  between stopping and continuation, and
if continues proceeds according to a single-level strategy which stops at the first observation below $b$.

In the ML problem the initial threshold $b=f^*(0)$ balances the success probability $e^{-b}$ and the continuation with $\tau^*$. Given the arrival $(0,b)$, 
the continuation achieves less than $v$, due to the risk of unsuccessful stopping above $b$, therefore in this case $e^{-b}<v$.

It is readily seen that  the stopping rule with threshold $f$ is a $\delta$-extension
of the type (\ref{Bot-ext}), for $\delta=b$, of the stopping rule with threshold curve $f_0$.  Applying  (\ref{InThr}) to the optimal rules
we obtain the key identities
\begin{equation}\label{KeyIdent}
v={\mathbb E}\tau^*,~~~\widehat{v}={\mathbb E}{\widehat \tau}^*,
\end{equation}
analogous to  (\ref{tau-v}). 
A minor justification that the maxima are not at $b=0$ follows
 by the above boundary fitting  arguments.

We will use the same term `$\delta$-extension' 
for construction (\ref{Bot-ext}) and equivalent perturbation of the threshold curve.
Specifically, (\ref{Bot-ext}) and more general $\delta$-extensions 
 in Section \ref{Shift} employ  the following
property of stopping rules, resulting from the shift invariance of the PPP.

 Consider stopping rule $\tau^{\delta}$, with  threshold curve $f+\delta$.
 We assert that arrivals with scores in an interval  $[x,x+\delta]$    have the same impact on the best-choice probability, regardless of $x\in [0, b]$.
Indeed, eliminating the box $[0,1]\times [x,x+\delta]$ 
(or conditioning on the event that no score falls in this range) yields a restricted PPP that can be mapped in an obvious way 
to $\Pi\cap([0,1]\times[\delta,\infty))$ with preservation of the time ordering of atoms and their ranking. Conversely, letting a stopping rule with threshold $f$ act on the process extended by inserting such a box
will result in the same best-choice probability regardless of $x$ below the initial threshold.

We verify an instance of this phenomenon  by comparing the impact of the score range $[0,\delta]$ with that of $[b,b+\delta]$ for small $\delta$.
Indeed, consider  $[0,\delta]$; the interval can be empty or contain a single point (we ignore events of probability $o(\delta)$). This decomposes $J(f+\delta)$ as
\begin{equation}\label{1stDe}
J(f+\delta)=(1-\delta)J(f)+ \delta \int_0^1 e^{-F(t)}{\rm d}t+o(\delta)
\end{equation}
and leads to the key identity.

Now consider the interval $[b, b+\delta]$ in the top position below the initial threshold; the interval can be empty or have a point arriving after or at $\tau^{\delta}$. This gives the decomposition
$$J(f+\delta)=(1-\delta)J(f)+ \delta \int_0^1 e^{-F(t)}(1-t) \int_0^b e^{-x(1-t)} {\rm d}x {\rm d}t + \delta \int_0^1 e^{-F(t)} e^{-b(1-t)} {\rm d}t                 +o(\delta).$$
Integrating  out $x$ and re-arranging we see that this is the same as (\ref{1stDe}).
However, we can skip the integration and  equivalently write the key identity in much  less compact form as 

\begin{equation}\label{2ndDe}
 \int_0^1 e^{-F(t)}e^{-b(1-t)}  {\rm d}t-
 \int_0^1 e^{-F(t)}t \int_0^b e^{-x(1-t)} {\rm d}x {\rm d}t=
 \int_0^1 e^{-F(t)} \int_b^{f(t)} e^{-G(x)-x(1-t)} {\rm d}x {\rm d}t.
\end{equation}
A good reason to leave this unsimplified is that in this form the relation 
admits  generalisation carried over in Section \ref{Shift}.

Intuitively, formulas (\ref{Bot-ext}) and  (\ref{2ndDe}) result from an operational conditioning on an arrival with given score and unspecified time.

\subsection{Single-level rules}

Let $f(t)\equiv x$ for some $x>0$, and let $(\tau_x,\xi_x)$ be the first observation below $x$. Since $(\tau_x,\xi_x)$  is necessarily a record, the FI and ML stopping rules with this threshold coincide.

The analysis of $\tau_x$ is intrinsically related to properties of the Poisson-paced records \cite{Bunge, GnedinSPA}.
The number of scores below $x$ is a random variable $K\stackrel{d}{=}{\rm Poisson}(x)$, and $\tau_x<1$ if $K>0$.
  By exchangeability among the arrivals, the
functional $J$ specialises as  
$$g_0(x):={\mathbb E}\left[\frac{1}{K} 1(K>0)\right],$$
and the expected stopping time  ${\mathbb E}\tau_x$ as 
$$g_1(x):={\mathbb E}\left[\frac{1}{K+1}\right].$$
Another interpretation of $g_1$ is the success probability conditional on stopping at time $0$,
$$
g_1(x)={\mathbb P}[\xi_x=M|\tau_x=0].
$$
Using self-similarity of the PPP, conditioning on $\tau=t$ yields
$$
g_0(x)=\int_0^1 xe^{-tx} g_1((1-t)x){\rm d}t,
$$
see \cite{GnedinSPA} for further relations.

These functions are easy to write down explicitly,
\begin{eqnarray*}
g_0(x)&=&e^{-x}\sum_{k=1}^\infty \frac{x^k}{k!k} =   e^{-x}\int_0^x\frac{e^z-1}{z}{\rm d}z,\\
g_1(x)&=&e^{-x}\sum_{k=0}^\infty \frac{x^k}{(k+1)!}=\frac{1-e^{-x}}{x}.
\end{eqnarray*}
Conditionally on  arrival $(0,x)$, the  immediate stopping yields the same success probability as  continuing with $\tau_x$ if  the value $x=c_0$ satisfies  the equation
$$
e^{-x}=g_0(x), ~~~c_0=0.804352.
$$
The next lemma highlights another important indifference condition.

\begin{lemma}   The constant $c_1$ defined as the root of the equation
$$
g_0(x)=g_1(x), ~~~c_1=1.50286,
$$
satisfies
$$c_1=\argmax g_0(x).$$

\end{lemma}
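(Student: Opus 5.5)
The plan is to show directly that $g_0'(x)=g_1(x)-g_0(x)$. The maximiser of $g_0$ is then a zero of $g_0-g_1$, and it remains to check that this zero is unique, so that it is $c_1$.

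\textbf{Derivative identity.} Start from $g_0(x)=e^{-x}\int_0^x\frac{e^z-1}{z}\,{\rm d}z$ and differentiate:
$$
g_0'(x)=-g_0(x)+e^{-x}\,\frac{e^x-1}{x}=-g_0(x)+\frac{1-e^{-x}}{x}=g_1(x)-g_0(x).
$$
A probabilistic reading confirms this. It is the Poisson derivative formula $\frac{{\rm d}}{{\rm d}x}{\mathbb E}\,h(K)={\mathbb E}[h(K+1)-h(K)]$ applied to $h(k)=\frac1k{\bf 1}(k>0)$, using ${\mathbb E}\frac{1}{K+1}=g_1(x)$. It is also the single-level case of Theorem~\ref{InT}: raising the level by $\delta$ is a $\delta$-extension of $\tau_x$, and ${\mathbb E}\tau_x=g_1(x)$, $J=g_0(x)$. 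So the stationarity condition $g_0'=0$ is exactly $g_0=g_1$.

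\textbf{Uniqueness of the root.} Let $h(x):=e^x(g_1(x)-g_0(x))=\frac{e^x-1}{x}-\int_0^x\frac{e^z-1}{z}\,{\rm d}z$. As $x\to0$ we have $h\to1>0$, and as $x\to\infty$ we have $h\to-\infty$, since $\int_0^x\frac{e^z-1}{z}\,{\rm d}z\sim \frac{e^x}{x}\bigl(1+\frac1x+\cdots\bigr)$ exceeds $\frac{e^x-1}{x}$ for large $x$. Compute
$$
h'(x)=\frac{xe^x-e^x+1}{x^2}-\frac{e^x-1}{x}=\frac{(1-x)(e^x-1)}{x^2}-\frac{(e^x-1)}{x}+\frac{x e^x - x(e^x-1)}{x^2}\cdot 0 .
$$
Simplifying gives
$$
h'(x)=\frac{xe^x-e^x+1-xe^x+x}{x^2}=\frac{1+x-e^x}{x^2}<0\qquad (x>0).
$$
Hence $h$ is strictly decreasing and has exactly one zero, $c_1$. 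Therefore $g_0'>0$ on $(0,c_1)$ and $g_0'<0$ on $(c_1,\infty)$, so $c_1$ is the unique global maximiser of $g_0$. The numerical value $c_1=1.50286$ then comes from solving $h(x)=0$, for example by Newton's method using the series for the exponential integral.

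\textbf{Main obstacle.} The only delicate point is the sign analysis of $h$, in particular confirming the limit at infinity. This is routine using the asymptotics of ${\rm Ei}$. Alternatively, it suffices to exhibit one point with $h<0$, which can be checked numerically, for instance at $x=2$.
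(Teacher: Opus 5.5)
Your proof is correct. The first half is the paper's argument in a different guise. The paper differentiates $g_0(x)=\sum_{k\ge1}p_k(x)/k$ term by term using $p_k'=p_{k-1}-p_k$, which is exactly your Poisson derivative formula. Your primary computation instead differentiates the closed form $e^{-x}\int_0^x (e^z-1)z^{-1}\,{\rm d}z$. Both give $g_0'=g_1-g_0$ in one line.

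The real difference is in the second half. The paper only asserts that $g_0$ is unimodal and concludes that the stationary point is the maximum. You prove this. You observe that $e^x g_0'(x)=h(x)$, with $h(0+)=1$, $h'(x)=(1+x-e^x)/x^2<0$ for $x>0$, and $h(2)<0$ (or $h\to-\infty$). Hence $g_0'$ changes sign exactly once, from positive to negative. This step is what makes ``the root of $g_0=g_1$ is the global maximiser'' a proved statement rather than an assertion, so your version is the more complete one.

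One cosmetic repair: the middle expression in your first display for $h'(x)$ is wrong. For instance, $(1-x)(e^x-1)\neq xe^x-e^x+1$, and the term multiplied by $0$ is meaningless. Delete it and keep only $h'(x)=\frac{xe^x-e^x+1}{x^2}-\frac{e^x-1}{x}=\frac{1+x-e^x}{x^2}$, which is what you correctly use afterwards.
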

\proof  Let $p_k(x)$ be the point probabilities of the Poisson distribution. Writing
$$
g_0(x)={\mathbb E}\left[\frac{1}{K} 1(K>0)\right]=\sum_{k=1}^\infty \frac{p_k(x)}{k},
$$
and using the familiar recursion $p_k'(x)=p_{k-1}(x)-p_k(x)$ gives
$$g_0'(x)=\sum_{k=1}^\infty \frac{p_{k-1}(x)-p_k(x)}{k}= g_1(x)-g_0(x).$$
The function $g_0$ is unimodal, therefore the stationarity condition characterises  the unique maximum.
\endpf

It follows that the  maximum success probability achievable with single-level rules is 
$$
g_0(c_1)=0.51735.
$$
In \cite{GM} this probability first appeared as the limit value of the class of single-level rules in discrete time.
The key identity takes the form 
\begin{equation}\label{kic}
{\mathbb E}\tau_{c_1}=g_0(c_1).
\end{equation}

\subsection{The full-information problem}

This section is a summary 
 of results  found in  \cite{GnedinSPA, GJAP, GKS, GMir, TamakiO}, with 
 the exception of formulas (\ref{WrateFI}) and (\ref{FIstoprate}) which are new.

The FI problem has a simple solution due to the scale-invariance properties of the PPP.
Stopping at record $(t,x)$ is successful with probability $e^{-x(1-t)}$ regardless of the scores observed before $t$. If the selection process continues,
the search for total minimum is restricted to the box $[0,x]\times(t,1]$ south-east of $(t,x)$, which is equivalent to starting at time $0$ and stopping below $x'=x(1-t)$.
From the analysis of single-level rules it follows that stopping at record $(t,x)$ outperforms  stopping at the next record to arrive if $x(1-t)\leq c_0$. 
Appealing to the monotone case of optimal stopping \cite{CRS, Ferguson},
we see that the optimal FI curve is the hyperbola
\begin{equation}\label{opt-h}
\widehat{f}^*(t):=\frac{c_0}{1-t}.
\end{equation}
This solution is universal, in the sense that if the problem is restricted to  scores below some fixed level $x$ (which is a finite poissonised problem), then 
the optimal rule prescribes to choose the first record in the subgraph of $\widehat{f}^*(t)\wedge x$.

For the general threshold curve $f$ defining a FI stopping point $(\widehat{\tau}, \widehat{\xi})$,
we may represent the success probability in terms of the time $\mu$ the record process passes through the boundary. In the PPP 
framework $\mu$ has a transparent geometric interpretation. To that end, consider a rectangular frame with north-east corner $(t,f(t))$ sliding along $f$ until
an atom of $\Pi$ is hit. 
The drift passage event   occurs if the northern side of the frame is hit, and the jump passage if the eastern side is hit. 
Whenever $f(1)<\infty$,
we set $\mu=1$ on the event of probability $e^{-f(1)}$ that the passage does not occur.
The distribution of the passage time is given by the survival probabilities
$$
{\mathbb P}[\mu>t]=e^{-tf(t)}, ~~~t\in[0,1).
$$
Assuming $f$ absolutely continuous, the terms of
$$(tf(t))'=f(t)+tf'(t)$$
give the jump and drift passage rates, respectively. 
Integrating out $\mu$ we obtain the jump-drift   representation of the best choice probability
\begin{equation}\label{jd}
{\widehat J}(f)=  \int_0^1 e^{-tf(t)} f(t)  g_1(f(t)(1-t)) {\rm d}t+  \int_0^1 e^{-tf(t)} tf'(t)   g_0(f(t)(1-t)) {\rm d}t.
\end{equation}

Furthermore, conditionally on $\mu=t$,  the jump passage gives the probability of immediate stopping
$${\mathbb P}[\widehat{\tau}=t|\mu=t]=\frac{f(t)}{f(t)+tf'(t)}.$$
In the case of  drift passage
the stopping occurs at the first arrival in the box $[0,f(t)]\times [t,1]$, or $\widehat{\tau}=1$ if $\Pi$ has no atoms in the box. 
Thus 
with probability 
$${\mathbb P}[\widehat{\tau}>t|\mu=t]=\frac{tf'(t)}{f(t)+tf'(t)}~~~$$
the stopping occurs at a later  time representable (in distribution) as
$$\widehat{\tau}=t+ \frac{Z}{f(t)} \wedge (1-t),$$
where $Z$ is a unit exponential random variable. Similarly, given $\mu=t$ the distribution of the chosen score is 
$$
\widehat{\xi}=Uf(t) {\bf 1}(\widehat{\tau}<1)+\infty\cdot {\bf 1}(\widehat{\tau}=1),
$$
where $U$ is a standard uniform variable, independent of $\widehat{\tau}$.
Integrating, we obtain the expected passage time and the expected stopping time as
\begin{eqnarray}
\label{Emu}
{\mathbb E}\mu&=&\int_0^1 e^{-tf(t)}{\rm d}t,\\
\label{Ethat}
{\mathbb E}\widehat{\tau}&=&{\mathbb E}\mu + \int_0^1 e^{-tf(t)} tf'(t)(1-t) g_1(f(t)(1-t)) {\rm d}t.
\end{eqnarray}

A non-stopping occurs in two ways. Either the boundary is crossed by drift at some time $\mu=t$ and no further atoms arrive in the remaining box $[t,1]\times[0,f(t)]$, 
an event with conditional probability $e^{-f(t)(1-t)}$; or the passage does not occur at all, which for $f(1)<\infty$ carries probability $e^{-f(1)}$. Integrating the first over the drift passage density and adding the second yields
\begin{equation}
\label{nonstop-f}
{\mathbb P}[\widehat{\tau} = 1] = \int_0^1 e^{-tf(t)} tf'(t) e^{-f(t)(1-t)} {\rm d}t + e^{-f(1)} = \int_0^1 e^{-f(t)} {\rm d}t = {\mathbb P}[M>f(T)],
\end{equation}
the last two forms agreeing by parts, since $\int_0^1 e^{-f(t)}tf'(t)\,{\rm d}t=\int_0^1 e^{-f(t)}\,{\rm d}t-e^{-f(1)}$.
For the optimal curve $f(1)=\infty$ and the added term vanishes.

The complete distribution of $\widehat{\tau}$ is determined more directly, by noting that the survival to time $t$ occurs precisely if the minimum score over $[0,t]$ falls above the threshold curve, and that 
the bivariate density of  such an atom is $\exp({-xt})$
(which is the law of $(E/t, tU)$ for $U,E$ independent standard  uniform  and exponential variables). Thus 
\begin{equation}\label{surv-FI}
{\mathbb P}[\widehat{\tau}\geq t]=\int_b^\infty   e^{-xt}      (g(x)\wedge t){\rm d}x,
\end{equation}
where $g$ is the generalised inverse of $f$ and $b=f(0)$.  This covers the instance  $t=1$ and coincides with (\ref{nonstop-f}).

Stopping at $(t,x)$ is successful if this is the location of $(T,M)$, and no other previous record occurred in the subgraph, which is precisely the event that the minimum in the strip $[0,t]\times[x,\infty)$ 
does not fall in the subgraph north-west of $(t,x)$. This yields the {\it winning rate},
\begin{equation}\label{WrateFI}
\widehat{W}(t,x):=e^{-x}\left( 1-\int\limits_{g(x)}^t \int\limits_x^{f(s)} e^{-t(z-x)}{\rm d}z{\rm d}s\right),
\end{equation}
which allows one to represent the performance of a strategy as
$$
\widehat{J}(f)=\int_0^1\int_0^{f(t)}\widehat{W}(t,x){\rm d}x{\rm d}t.
$$
Marginal winning rates are derived by integration.

For the rest of this section we consider a hyperbolic threshold curve, denoted 
$$
h_b(t):=\frac{b}{1-t}, ~~~b>0,
$$  
thus $h_{c_0}=\widehat{f}^*$.
The characteristic property of such threshold is that the area of the south-east box with corner point $(t,h_b(t))$ is constant $b$; this largely simplifies the formulas. 
Thus  the probability of the jump passage is readily evaluated as 
$${\mathbb P}[\widehat{\tau}=\mu]=
be^b {\rm E}_1(b),$$ 
in terms of  the exponential integral function
$${\rm E}_1 (b):=\int_b^\infty \frac{e^{-z}}{z}\,{\rm d}z.$$
This gives the FI best choice probability split into jump and drift components
\begin{equation}\label{Jfb}
J(h_b) = {\rm E}_1(b)(e^b - 1) + \big(1 - b e^b {\rm E}_1(b)\big) g_0(b).
\end{equation}
At maximiser $b=c_0$ the formula simplifies further  due to $g_0(c_0)=e^{-c_0}$,  thus resulting in the well known FI  value 
\begin{equation}\label{FI-value}
\widehat{v}=e^{-c_0}+(e^{c_0}-1-c_0){\rm E}_1(c_0)=0.580164.
\end{equation}

The expected passage time is
\begin{equation}
\label{Emuhyp}
{\mathbb E}\mu = 1 - b e^b {\rm E}_1(b).
\end{equation}
By substituting (\ref{Emuhyp}) in  (\ref{Jfb}), the expected stopping time simplifies to
\begin{equation}
\label{Ethat1}
{\mathbb E}\widehat{\tau} = e^{-b} + (e^b - b - 1){\rm E}_1(b).
\end{equation}
The probability of terminating without choice is found from (\ref{nonstop-f}):
\begin{equation}
\label{nonstop}
{\mathbb P}[\widehat{\tau} = 1] = e^{-b} - b {\rm E}_1(b).
\end{equation}

For the distribution formula (\ref{surv-FI}) we determine first the inverse  $g(x)=(1-b/x)_+$, then integrate to obtain
\begin{equation*}\label{hyper-surv}
{\mathbb P}[\widehat{\tau}\geq t]=
\exp\left(-\frac{bt}{1-t}\right) + \frac{e^{-bt} - \exp\left(-\frac{bt}{1-t}\right)}{t} + b \left[ \mathrm{E}_1\left(\frac{bt}{1-t}\right) - \mathrm{E}_1(bt) \right],
\end{equation*}
which has the density for $t<1$

\begin{equation}\label{FIstoprate}
{\mathbb P}[\widehat{\tau}\in [t,t+{\rm d}t]]/{\rm d}t= 
\frac{e^{-bt}-e^{-bt/(1-t)}
}{t^{2}}.
\end{equation}

The winning rate (\ref{WrateFI}) specialises as
\begin{equation}\label{WrateFIhyp}
\widehat{W}(t,x)=e^{-x}\left(1-\frac{t-g(x)}{t}
+b\,e^{tx}\left[\frac{e^{-a}}{a}-\frac{e^{-c}}{c}-{\rm E}_1(a)+{\rm E}_1(c)\right]\right),
\end{equation}
where $g(x)=(1-b/x)_+$, $a:=t\,(x\vee b)$ and $c:=tb/(1-t)$.

Plugging  $b=c_0$ in (\ref{Ethat1}) we obtain (\ref{FI-value}), hence verify
the key identity
$\widehat{v}={\mathbb E}\widehat{\tau}^*$, in accord with  (\ref{InThr}).
The probability of no choice under the optimal stopping rule is 
\begin{equation}
\label{nonstopping_optimal}
{\mathbb P}[\widehat{\tau}^* = 1] = 0.1995.
\end{equation}
An explicit  cumbersome formula for the temporal winning rate with $\widehat{\tau}^*$ is found in 
 \cite{GMir}.


\section{The ML best choice}

\subsection{Basics}

In contrast to  the FI setting, 
in the ML problem the stopping rules do not adapt to the record process, therefore the impact of $\Pi$-atoms falling in the epigraph of  given threshold curve $f$
is accounted for in a more complex way by taking averages.
Let $g$ be the generalised inverse of $f$, so $g(x)=0$ for $x\leq f(0)$, and denote its primitive function
$$
G(x):=\int_0^x g(z){\rm d}z.
$$
Thus $G(x)$ is the expected number of atoms in the domain $\{(t,z): f(t)<z\leq x\}$. Note that the primitive of $f$ is a convex function $F$, and that $G$ is its convex-conjugate satisfying
the familiar duality relation resulting from the threshold monotonicity
\begin{equation}
\label{YF}
tf(t)=F(t)+G(f(t)).
\end{equation}

Recall the notation 
$b:=f(0)$
for the {initial threshold},  leading to the decomposition $f$ and $F$  as
$$
f(t)=f_0(t)+b,~~~F(t)=F_0(t)+bt,
$$
where $f_0(0)=F_0'(0)=0$.

The ML optimisation objective (\ref{crit-ML}) has various integral representations.
Integrating over the stopping point density
$$
{\mathbb P}[(\tau,\xi)\in ({\rm d}t,{\rm d}x)]=e^{-F(t)}{\bf 1}(x\leq f(t)){\rm d}t{\rm d}x,
$$
we obtain from the geometry
\begin{equation}\label{PwinG}
J(f)=\int_0^1 e^{-F(t)}\int_0^{f(t)} \exp(-G(x)-x(1-t)){\rm d}x{\rm d}t.
\end{equation}
The winning rate in the stopping region,
\begin{equation}\label{WrateML}
W(t,x):=e^{-F(t)}\exp(-G(x)-x(1-t)),
\end{equation}
is much simpler than the FI counterpart  (\ref{WrateFI}), since we do not need to integrate out the running record.

We may exclude $G$ using (\ref{YF}).
To that end, we split the internal integral   at $b$, thus obtaining $J=J_0+J_1$, where 
$$
J_0(f)=\int_0^1 e^{-F(t)}\int_0^{b} e^{-x(1-t)}{\rm d}x\,{\rm d}t=    \int_0^1 e^{-F(t)} \, \frac{1 - e^{-b(1-t)}}{1-t}\, {\rm d}t,
$$
since $G(x)=0$ for $x<b$. For the second part we use the change of variable $x=f(s)$ and (\ref{YF}). Observing that the range $b<x\leq f(t)$ corresponds to $0\leq s\leq t$
yields
\begin{equation}\label{J1}
J_1(f)=\int_0^1 \int_0^t \exp \Big( -f(s)- \{F(t)-F(s)-(t-s)f(s)\} \Big)f'(s){\rm d}s{\rm d}t.
\end{equation}

The key identity assumes the form
\begin{equation}\label{J=Etau}
\frac{\rm d}{{\rm d}b }J(f_0+b) ={\mathbb E}\tau-J(f),
\end{equation}
which can  be verified directly, by observing that 
in (\ref{J1}) the expression in curly brackets is shift-invariant, whence 
$$
\frac{\rm d}{{\rm d}b }J_1(f_0+b)=-J_1(f).
$$

\begin{prop} Let $f_0$ be a c{\'a}dl{\'a}g nondecreasing function. If the function $b\mapsto J(f_0+b)$ has a stationary point $b^*$ then it is the unique maximum
 characterised 
by the condition
\begin{equation}\label{key-ML}
J(f)={\mathbb E}\tau,
\end{equation}
where $f=f_0+b^*$ and $\tau$ is the stopping time associated with $f$.
\end{prop}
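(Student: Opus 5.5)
The plan is to study $\phi(b):=J(f_0+b)$ as a function of $b\ge0$ via the key identity (\ref{J=Etau}). Writing $F_b(t)=F_0(t)+bt$ and letting $\tau_b$ denote the stopping time for $f_0+b$, we have $\mathbb{E}\tau_b=\int_0^1\mathbb{P}[\tau_b>t]\,{\rm d}t=\int_0^1 e^{-F_0(t)-bt}{\rm d}t$ by (\ref{MLtaup}). Identity (\ref{J=Etau}) then reads
$$\phi'(b)=\psi(b)-\phi(b),\qquad \psi(b):=\int_0^1 e^{-F_0(t)-bt}\,{\rm d}t .$$
This is a first-order linear ODE in $b$. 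Its solution is obtained by multiplying by $e^{b}$:
$$\frac{\rm d}{{\rm d}b}\bigl(e^{b}\phi(b)\bigr)=e^{b}\psi(b)=\int_0^1 e^{-F_0(t)+b(1-t)}{\rm d}t .$$
One can also justify this directly from (\ref{PwinG}): $J_0$ and $J_1$ are explicit in $b$, and $J_1(f_0+b)=e^{-b}J_1(f_0)$ since the curly bracket in (\ref{J1}) is shift-invariant. This shows that $\phi$ is $C^1$, indeed smooth, in $b$.

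At a stationary point $b^*$, $\phi'(b^*)=0$ gives $\phi(b^*)=\psi(b^*)$, which is exactly (\ref{key-ML}), $J(f)=\mathbb{E}\tau$. So the characterisation is immediate, and what remains is uniqueness together with the claim that $b^*$ is a maximum.

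For that, I will study the sign of $\phi'=\psi-\phi$. The function $\psi$ is strictly decreasing, since $\psi'(b)=-\int_0^1 te^{-F_0(t)-bt}{\rm d}t<0$. Consider $D(b):=\psi(b)-\phi(b)=\phi'(b)$. Then
$$D'(b)=\psi'(b)-\phi'(b)=\psi'(b)-D(b).$$
At any zero of $D$ this gives $D'=\psi'<0$, so $D$ crosses zero only downward. A $C^1$ function that crosses zero only downward can have at most one zero: between two consecutive downward crossings it would have to cross upward. Therefore any stationary point $b^*$ is unique, $\phi'>0$ on $[0,b^*)$ and $\phi'<0$ on $(b^*,\infty)$, so $b^*$ is the unique global maximum of $b\mapsto J(f_0+b)$ on $[0,\infty)$.

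The main obstacle is making the zero-crossing step rigorous. This requires $D$ to be $C^1$ (which follows from the explicit $b$-dependence above) and requires ruling out tangential zeros. The latter is handled by the strict inequality $\psi'<0$, which holds because $F_0$ is finite on $[0,1)$ and so the integrand is positive on a set of positive measure. A minor point is the behaviour at $t=1$: if $f_0(t)\to\infty$, then $F_0(1)$ may be infinite, but $\psi$ is still well defined and differentiable under the integral sign by dominated convergence.
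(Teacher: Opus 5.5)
Your proposal is correct and follows the paper's proof. Both arguments use (\ref{J=Etau}) to identify stationarity with $J(f)={\mathbb E}\tau$, and both observe that at any stationary point the second derivative reduces to $-\int_0^1 t\,e^{-F(t)}\,{\rm d}t<0$; in your notation this is $D'=\psi'$ at a zero of $D$, which forces a single stationary point that is the global maximum. Your zero-crossing argument simply makes rigorous the step the paper compresses into ``therefore it is unimodal.''
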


\proof
Differentiating (\ref{J=Etau})  and ${\mathbb E}\tau=\int_0^1 \exp(-F_0(t)-bt){\rm d}t$,
\begin{eqnarray*}
\frac{\rm d^2}{{\rm d}b^2 }J(f_0+b) = \frac{\rm d}{{\rm d}b}{\mathbb E}\tau - \frac{\rm d}{{\rm d}b}J(f)=
 -\int_0^1 (1+t) e^{-F(t)} \, {\rm d}t + J(f).
\end{eqnarray*}
From (\ref{key-ML}), this is clearly negative   at a stationary point.  
The function approaches $0$ as $b\to\infty$, therefore it is unimodal: either with maximum at $0$ and decreasing, or 
with a unique stationary point $b^*$ where it achieves the absolute maximum. 
\endpf

Conditionally on the arrival at $(0,b)$, a subsequent choice can only be successful by stopping below $b$. For the optimal $\tau^*$ this results in the balance at the initial threshold  condition
\begin{equation}\label{BaB}
e^{-b}=J(f^*;0)=J_0(f^*),~~{\rm for}~~b=f^*(0).
\end{equation}

\subsection{Balance at the boundary}

Generalising (\ref{BaB}), 
we will introduce next a continuous-time analogue of (\ref{discBaIni}).  
Consider a   threshold curve $f$ and its stopping rule $\tau$.
For $t<1$ the event $\{\tau\geq t\}$ occurs if $\Pi$ has no atoms in the subgraph of $f$ over $[0,t)$. Given that, and conditional on arrival
$(t,f(t))$ on the boundary, the immediate  stopping is successful with probability,
$$\exp(-G(f(t))-f(t)(1-t))=e^{F(t)-f(t)},$$
where we used (\ref{YF}).
 A subsequent stopping with $\tau$ can be  successful
only by choosing a record $(u,x)$ below $f(t)$, which yields the success probability by continuation  equal to
$$
e^{F(t)} J(f;t), 
$$
where
$$
J(f;t):=\int_t^1 e^{-F(u)} \int_0^{f(t)}\exp(-x(1-u)-G(x)){\rm d}x  {\rm d}u.
$$

The  balance on the boundary condition is recorded in the next theorem.
A rigorous proof using the shift-invariance of the PPP will be given in the next section.
\begin{theorem}\label{balance} An  optimal threshold  $f^*$ in the ML best-choice problem satisfies the 
equation
\begin{equation}\label{IntEq}
e^{-f^*(t)}= J(f^*;t), ~~t\in [0,1),
\end{equation}
which implies the key identity ${\mathbb E}\tau^*=v$.
\end{theorem}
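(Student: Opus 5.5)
Our plan is to prove (\ref{IntEq}) by a first-order variation of the threshold curve at a single level, and then to obtain the key identity by integrating the family of balance equations against the time variable.

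\emph{Step 1 (local variation).} Fix $t_0\in(0,1)$ at which $f^*$ is continuous and strictly increasing, and put $x_0=f^*(t_0)$. We work at the level of the inverse $g$: replace $g$ on the score band $[x_0,x_0+\delta]$ by $g-\varepsilon$. This moves the boundary left by $\varepsilon$ over that band. The added region is a thin box $[t_0-\varepsilon,t_0]\times[x_0,x_0+\delta]$, with area $\varepsilon\delta$ to first order.

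\emph{Step 2 (effect of one atom).} We condition on $\Pi$ having at most one atom in this box; two atoms occur with probability $o(\varepsilon\delta)$. Such an atom changes the outcome only if $\tau$ has not stopped before time $t_0$, which has probability $e^{-F(t_0)}$. In that case the perturbed rule stops at the atom and wins with probability $e^{F(t_0)-x_0}$, by (\ref{YF}). The unperturbed rule rejects the atom and continues. The rejected atom now lies above every later acceptable score below $x_0$, so only those scores can win, and this gives the continuation value $e^{F(t_0)}J(f;t_0)$. Atoms in the box landing above $x_0$ are irrelevant to first order, by the monotonicity of $g$.

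\emph{Step 3 (stationarity).} Combining Steps 1 and 2,
$$J(f_\varepsilon)-J(f)=\varepsilon\delta\,\bigl(e^{-x_0}-J(f;t_0)\bigr)+o(\varepsilon\delta).$$
Variations with $\varepsilon$ of either sign are admissible while monotonicity is kept. Optimality therefore forces the bracket to vanish, which is (\ref{IntEq}) at $t_0$. Jumps or flat pieces of $f^*$ are handled by right-continuity and density. At $t=0$ the equation is (\ref{BaB}), which we already have.

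\emph{Main obstacle.} The hard step is making Steps 1--2 rigorous. One must justify that an optimal $f^*$ exists and is regular enough, near $t_0$, for two-sided perturbations, and that the remainder is uniformly $o(\varepsilon\delta)$. We would follow the paper's shift-invariance approach: view the perturbation as inserting or removing a score band $[x_0,x_0+\delta]$ restricted to times near $t_0$. This is the band generalisation of the $\delta$-extension (\ref{Bot-ext}), which turns the Palm-type conditioning on an arrival at $(t_0,x_0)$ into an honest probability computation, in the same way as (\ref{1stDe}) and (\ref{2ndDe}).

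\emph{Key identity.} Multiply (\ref{IntEq}) by the boundary arrival rate $e^{-F(t)}f'(t)$ and integrate over $t\in[0,1)$, adding the $t=0$ contribution $b$ times (\ref{BaB}). On the left, $J_1(f^*)$ appears by (\ref{J1}) and the $b$-part reproduces the $e^{-b}$ terms. On the right, exchanging the order of integration in $J(f^*;t)$ and integrating by parts gives $\mathbb{E}\tau^*-J_0(f^*)$. Rearranging yields $J(f^*)=\mathbb{E}\tau^*$. As a shortcut, we would instead check that the band variation over all $x\in[0,b]$ reproduces $\frac{\rm d}{{\rm d}b}J(f_0+b)=0$, and then invoke (\ref{J=Etau}) and (\ref{key-ML}) directly.
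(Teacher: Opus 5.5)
Your derivation of (\ref{IntEq}) is correct in substance, but it takes a different route from the paper.

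\textbf{The balance equation.} The paper never localises in time. It perturbs the tail, $f\mapsto f+\delta\mathbf{1}(t\ge\alpha)$, and uses shift invariance of the PPP to turn this into inserting one score band $[f(\alpha),f(\alpha)+\delta]$ over the whole time axis. This gives the cumulative condition (\ref{BalanceInt}), namely $\int_\alpha^1\bigl(e^{-f(u)}-J(f;u)\bigr)\,{\rm d}u=0$ for every $\alpha$, which the paper then differentiates in $\alpha$. Your box instead computes the variational derivative $e^{-f(t_0)}-J(f;t_0)$ pointwise, as the literal stop-versus-continue comparison at a boundary arrival. It needs only independence of Poisson counts in disjoint regions and no differentiation step. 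The cost is two small parameters and a continuity check of the stopping and continuation values near $(t_0,f(t_0))$.

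One claim is wrong. The map $g\mapsto g-\varepsilon\mathbf{1}_{[x_0,x_0+\delta]}$ does not preserve monotonicity for either sign of $\varepsilon$: it creates a downward jump of $g$ at $x_0$ or at $x_0+\delta$, so near $t_0$ the acceptance set is not an interval. The perturbed rules are still admissible competitors, because threshold rules are optimal among all memoryless rules (the PPP form of Theorem \ref{thr}). You should state this explicitly. In that larger class you can add the box $[t_0-\varepsilon,t_0]\times[f(t_0),f(t_0)+\delta]$ or remove $[t_0,t_0+\varepsilon]\times[f(t_0)-\delta,f(t_0)]$ at every $t_0$. Jumps and flat pieces of $f^*$ then need no density argument. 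The paper's own tail shift with $\delta<0$ tacitly needs the same enlargement.

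\textbf{The key identity.} Your main route fails. Once $\Delta:=e^{-f^*}-J(f^*;\cdot)$ vanishes identically, every weighted integral of $\Delta$ is trivially zero. So all the content must lie in an identity, valid for every $f$, that turns the weighted integral into $\mathbb E\tau-J(f)$. Such an identity holds for the weight ${\rm d}t$, i.e.\ the constant perturbation $h\equiv1$, which is the full vertical shift: $\int_0^1\bigl(e^{-f(t)}-J(f;t)\bigr)\,{\rm d}t=\mathbb E\tau-J(f)$ by Theorem \ref{InT}. This is (\ref{VarJ0}), and it is exactly the paper's proof.

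The identity does not hold for your weight $e^{-F(t)}f'(t)\,{\rm d}t$ plus the atom $b$ at $t=0$. For $f\equiv b$ your right-hand side equals $b\,J(f;0)=b\,g_0(b)$, not $\mathbb E\tau-J_0=g_1(b)-g_0(b)$. At $b=c_1$ the latter vanishes and the former does not. The factor $e^{-F(t)}$ double counts survival, since (\ref{IntEq}) is already unconditional. Summing your boxes over all levels gives the weight ${\rm d}x=f'(t)\,{\rm d}t$, which is a horizontal shift of the boundary. Integrating by parts with that weight (plus the atom $b$ at $0$) yields the tail-moment relation (\ref{TailMoment}), not the key identity.

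Your fallback is correct and coincides with the paper's argument. It invokes $\frac{\rm d}{{\rm d}b}J(f_0+b)=\mathbb E\tau-J(f)$, which must vanish at the optimum, either because $b^*>0$ is an interior maximiser or because $\int_0^1\Delta\,{\rm d}t=0$. Keep it and drop the weighted integration.
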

\proof 
The expression 
$$
\delta J(f)(t):=  e^{-f(t)}- J(f;t)
$$
is the variational derivative of  $J$. Applying the  functional $\delta J$  to the constant function $1$,
from Theorem \ref{InT} we obtain 
\begin{equation}\label{VarJ0}
\int_0^1 \delta J(f)(t)\cdot 1\,{\rm d}t={\mathbb E}\tau-J(f),
\end{equation}
which is  $0$ for the extremal $f^*$.
\endpf
\noindent
We see that  the key identity is a consequence of the global balance at the optimal threshold.

Manipulating as we did with $J(f)$ to exclude the convex-conjugate $G$, we decompose 
$$
J(f;t)=J_0(f;t)+J_1(f;t)
$$
into a part corresponding to stopping below the initial threshold $b=f(0)$
\begin{equation}\label{J0t}
J_0(f;t):=\int_t^1 e^{-F(u)} \,\frac{1-e^{-b(1-u) } }      {1-u}\,{\rm d}u\,,
\end{equation}
and a  part from stopping at a score $b<x<f(t)$
\begin{equation}\label{J1t}
J_1(f;t):=\int_t^1\int_0^t  
 \exp {\Big(} F(s)-F(u)-(s+1-u) f(s) {\Big)}
 f'(s)     {\rm d}s  {\rm d}u.
\end{equation}

Similarly to (\ref{adv}) a penalty $\Delta=\widehat{J}(f)- J(f)$ for memorylessness   connects with the FI success probability. This part of the drift passage event
takes the form
\begin{equation*}
\Delta= \int_0^1 e^{-t f(t)}t f'(t) \int_t^1 e^{-(F(u)-F(t))}\left((f(u)-f(t))g_0((1-u)f(t))\right){\rm d}u{\rm d}t.
\end{equation*}
In this formula the variable $t$ stands for the drift passage time, and $u$ for the subsequent  failing choice of the memoryless rule, which in turn is followed by a successful 
choice  of the FI rule.

\subsection{Vertical shift variations}\label{Shift}

Generalising $\delta$-extensions operating as a complete threshold curve shift,
we introduce partial shifts, which are threshold variations best suited to the
context of rank problems.

Given a cutoff time $\alpha\in[0,1)$ kept as a parameter, let
\[
\beta:=f(\alpha),
\]
and consider the variation
\[
f^\delta(t):=f(t)+\delta\,{\bf 1}(t\ge \alpha).
\]
For sufficiently small $\delta$, the perturbed curve remains increasing and
therefore belongs to the admissible class.
The perturbation acts only on the tail of the threshold curve.

Setting
\[
K(t,x):=e^{-G(x)-x(1-t)},
\]
the objective functional has the form
\begin{eqnarray*}
J(f)
&=&
\int_0^1 e^{-F(t)}
\int_0^{f(t)}K(t,x)\,{\rm d}x\,{\rm d}t \\
&=&
\int_0^\alpha e^{-F(t)}
\int_0^{f(t)}K(t,x)\,{\rm d}x\,{\rm d}t
+
\int_\alpha^1 e^{-F(t)}
\int_0^{f(t)}K(t,x)\,{\rm d}x\,{\rm d}t,
\end{eqnarray*}
where only the second part is affected by the variation. Denote this part by
\[
w:=
\int_\alpha^1 e^{-F(t)}
\int_0^{f(t)}K(t,x)\,{\rm d}x\,{\rm d}t
=
\prob[\tau=T,\tau>\alpha].
\]

Note that $\{\tau>\alpha\}=\{\tau^\delta>\alpha\}$.
Likewise, set
\[
w^\delta
:=
\int_\alpha^1
e^{-F(t)-\delta(t-\alpha)}
\int_0^{f(t)+\delta}
K(t,x)\,{\rm d}x\,{\rm d}t
=
\prob[\tau^\delta=T,\tau>\alpha].
\]

To compute the variation of $J$ it is best to argue probabilistically.
We have four possibilities:

\begin{enumerate}
\item no score in $[\beta,\beta+\delta]$;
\item a point in the interval exists and arrives before $\alpha$, in which case
the point is skipped by both $\tau$ and $\tau^\delta$;
\item the point exists and $\tau^\delta$ stops before its arrival;
\item $\tau^\delta$ stops at the arrival.
\end{enumerate}

The strategies $\tau$ and $\tau^\delta$ can be coupled and have the same
outcome on the event that no arrival occurs in $[\beta,\beta+\delta]$.
This gives
\begin{eqnarray*}
w^\delta
&=&
(1-\delta)w
+
\delta\alpha
\int_\alpha^1
e^{-F(t)}
\int_0^\beta K(t,x)\,{\rm d}x\,{\rm d}t
\\
&&
+
\delta
\int_\alpha^1
e^{-F(t)}(1-t)
\int_0^\beta K(t,x)\,{\rm d}x\,{\rm d}t
\\
&&
+
\delta
\int_\alpha^1
e^{-F(t)}K(t,\beta)\,{\rm d}t
+o(\delta).
\end{eqnarray*}

Splitting $w$ itself as
\[
w=
\int_\alpha^1
e^{-F(t)}
\int_0^\beta K(t,x)\,{\rm d}x\,{\rm d}t
+
\int_\alpha^1
e^{-F(t)}
\int_\beta^{f(t)}K(t,x)\,{\rm d}x\,{\rm d}t,
\]
readily yields the stationarity condition
\begin{equation}\label{BalanceInt}
\int_t^1 e^{-F(u)}K(u,f(t))\,{\rm d}u-
\int_t^1 e^{-F(u)}(u-t)
\int_0^{f(t)} K(u,x)\,{\rm d}x\,{\rm d}u=
\int_t^1 e^{-F(u)}
\int_{f(t)}^{f(u)}K(u,x)\,{\rm d}x\,{\rm d}u .
\end{equation}

For $t=0$, $f(t)=b$, this is the key identity in the form
(\ref{2ndDe}).

The stationarity equation (\ref{BalanceInt}) is the integrated form of the
boundary balance equation (\ref{IntEq}). Differentiating
(\ref{BalanceInt}) with respect to $t$, together with straightforward
manipulations using the convex-duality identity (\ref{YF}), recovers
(\ref{IntEq}). Thus stationarity with respect to every tail shift is
equivalent to the pointwise balance condition.

\begin{rem}{\rm 
For a general threshold curve $f$, the first variation corresponding to a tail
shift at time $t$ is
\[
\int_t^1 \delta J(u)\,{\rm d}u
=
\int_t^1
\bigl(e^{-f(u)}-J(f;u)\bigr)\,
{\rm d}u,
\]
where (\ref{VarJ0}) appears as the special case $t=0$.
Thus the integrated stationarity condition
(\ref{BalanceInt}) is the cumulative form of the local balance equation
(\ref{IntEq}).
}
\end{rem}

Differentiating the local balance equation yields another useful identity.
Consider the marginal winning rates at level $z$  and time $t$:
\[c(z):=\int_{g(z)}^1 W(t,z){\rm d}t,~~~~~~   w(t):=\int_0^{f(t)} W(t,x){\rm d}x,\]
and introduce the balance defect 
\[
\Delta(t):=e^{-f(t)}-J(f;t).
\]

\begin{lemma}\label{lem:c-density}
It holds that
\[
c(z)
=
g'(z)\bigl[w(g(z))-\Delta'(g(z))\bigr]
-e^{-z}.
\]
In particular, $\Delta(t)\equiv0$ for an optimal threshold $f^*$, 
and therefore the optimal marginal winning rates   satisfy
\begin{equation}\label{cIdentity}
c(z)=g'(z)w(g(z))-e^{-z}.
\end{equation}
\end{lemma}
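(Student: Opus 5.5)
The plan is to differentiate the balance defect $\Delta(t)=e^{-f(t)}-J(f;t)$ directly in $t$, and then change variables from time to level through the generalised inverse $g$. I assume throughout that $f$ is differentiable with $f'>0$, as the paper does for exposition, so that $g=f^{-1}$ on $(b,\infty)$ and $g'(z)=1/f'(g(z))$.

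First I rewrite $J(f;t)$ in terms of the winning rate (\ref{WrateML}). Since $W(u,x)=e^{-F(u)}K(u,x)$,
\[
J(f;t)=\int_t^1\int_0^{f(t)}W(u,x)\,{\rm d}x\,{\rm d}u .
\]
The parameter $t$ enters this expression in two places: the lower limit of the outer integral, and the upper limit $f(t)$ of the inner integral. The integrand $W$ itself does not depend on $t$.

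Next I differentiate by the Leibniz rule:
\[
\frac{{\rm d}}{{\rm d}t}J(f;t)=-\int_0^{f(t)}W(t,x)\,{\rm d}x+f'(t)\int_t^1 W(u,f(t))\,{\rm d}u
=-w(t)+f'(t)\int_t^1 W(u,f(t))\,{\rm d}u .
\]
Now I put $t=g(z)$, so that $f(t)=z$. The last integral becomes $\int_{g(z)}^1 W(u,z)\,{\rm d}u=c(z)$. Hence
\[
\Delta'(g(z))=-f'(g(z))\,e^{-z}+w(g(z))-f'(g(z))\,c(z).
\]
Multiplying through by $g'(z)=1/f'(g(z))$ and solving for $c(z)$ gives the stated formula
\[
c(z)=g'(z)\bigl[w(g(z))-\Delta'(g(z))\bigr]-e^{-z}.
\]

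For the optimal threshold $f^*$, Theorem \ref{balance} gives $\Delta\equiv0$ on $[0,1)$. Therefore $\Delta'\equiv0$, and the formula reduces to (\ref{cIdentity}).

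The main obstacle is regularity rather than algebra. The relation $f(g(z))=z$ and the derivative $g'$ only make sense for levels $z>b$, where the curve is strictly increasing. For $z\le b$ we have $g(z)=0$ and $g'(z)=0$, so the identity must be read with that convention, or restricted to $z>b$.

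If $f$ has flat pieces or jumps, I would replace $f'\,{\rm d}t$ by the Stieltjes differential ${\rm d}f$, as the paper does elsewhere. Then $g'$ is interpreted as a density in $z$, and the same computation goes through almost everywhere. Differentiability of $\Delta$ for the optimal curve needs no separate argument, since $\Delta$ is identically zero there.
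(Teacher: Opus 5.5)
Your proof is correct and is the same argument as the paper's. Your Leibniz-rule differentiation of $J(f;t)=\int_t^1\int_0^{f(t)}W(u,x)\,{\rm d}x\,{\rm d}u$ reproduces the paper's $\Delta'(t)=w(t)-f'(t)\bigl[e^{-f(t)}+c(f(t))\bigr]$, and you then substitute $t=g(z)$ and invoke Theorem~\ref{balance} for $\Delta\equiv0$, exactly as the paper does. One correction to your regularity remark: the identity must be restricted to $z>b$, not read with the convention $g=g'=0$ below $b$, because that convention would give the impossible $c(z)=-e^{-z}<0$; this matches the paper, which only integrates (\ref{cIdentity}) over $[b,\infty)$.
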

\proof
Differentiating the balance defect gives
\[\Delta'(t)=w(t)-f'(t)\bigl[e^{-f(t)}+c(f(t))\bigr].\]
Writing $z=f(t)$ and using
\[g'(z)=\frac1{f'(g(z))},\]
yields
\[c(z)=
g'(z)\bigl[w(g(z))-\Delta'(g(z))\bigr]-e^{-z}.\]
For   optimal threshold $\Delta\equiv0$, which implies
$\Delta'\equiv0$, and (\ref{cIdentity}) follows.
\endpf

\begin{prop}\label{TailMom}
For an optimal threshold $f^*$, $b=f^*(0)$, and the corresponding marginal winning rates $c, w$
\begin{equation}\label{TailMoment}
\int_{b}^\infty z\,c(z)\,{\rm d}z
=
\int_0^1 f^*(t)w(t)\,{\rm d}t
-
(1+b)e^{-b}.
\end{equation}
\end{prop}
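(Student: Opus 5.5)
The plan is to integrate the pointwise identity (\ref{cIdentity}) of Lemma \ref{lem:c-density} against the weight $z$ over the range $z\in(b,\infty)$, where $b=f^*(0)$. Since $\Delta\equiv 0$ for the optimal threshold, Lemma \ref{lem:c-density} gives, for every $z>b$,
\[
z\,c(z)=z\,g'(z)\,w(g(z))-z\,e^{-z},
\]
where $g$ is the inverse of $f^*$. Integrating from $b$ to $\infty$ will produce the three terms of (\ref{TailMoment}) directly.

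The last term is elementary:
\[
\int_b^\infty z e^{-z}\,{\rm d}z=(1+b)e^{-b}.
\]
For the middle term I will change variables $z=f^*(t)$, equivalently $t=g(z)$. As $z$ runs over $(b,\infty)$, $t$ runs over $(0,1)$, and $g'(z)\,{\rm d}z={\rm d}t$. Hence
\[
\int_b^\infty z\,g'(z)\,w(g(z))\,{\rm d}z=\int_0^1 f^*(t)\,w(t)\,{\rm d}t,
\]
which is exactly the first term on the right of (\ref{TailMoment}).

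The step needing care is the justification of this substitution and of the improper integrals. First, $g$ must map $(b,\infty)$ onto $(0,1)$, that is, $f^*(t)\to\infty$ as $t\to1$. I would argue this from the balance equation (\ref{IntEq}): $J(f^*;t)\to0$ as $t\to1$, so $e^{-f^*(t)}\to 0$. Second, $f^*$ should be continuous and strictly increasing on $[0,1)$, so that $g$ is a genuine absolutely continuous inverse without flat pieces or jumps. If $f^*$ had jumps or flat stretches, I would instead write everything with Stieltjes differentials ${\rm d}f^*$, as the paper allows, and check that the substitution ${\rm d}t=g'(z)\,{\rm d}z$ still holds as an identity of measures.

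Convergence should follow from nonnegativity of the integrands. Both $c$ and $w$ are nonnegative marginal winning rates, and $\int_0^1 f^* w\,{\rm d}t\le \int_0^1\int_0^{f^*(t)} x\,W(t,x)\,{\rm d}x\,{\rm d}t$ up to a factor, which is finite. I therefore expect to integrate over $(b,Z)$ for finite $Z$, equivalently over $(0,g(Z))$, and then pass $Z\to\infty$ by monotone convergence. That completes the proof.
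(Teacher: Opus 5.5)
Your proposal is correct and follows the paper's proof exactly: multiply (\ref{cIdentity}) by $z$, integrate over $(b,\infty)$, evaluate $\int_b^\infty z e^{-z}\,{\rm d}z=(1+b)e^{-b}$, and substitute $z=f^*(t)$ in the remaining term. One side remark needs correcting: since $x\le f^*(t)$ in the inner integral, $\int_0^{f^*(t)}x\,W(t,x)\,{\rm d}x\le f^*(t)\,w(t)$, so your convergence comparison runs the wrong way and fails by a factor of order $f^*(t)$ as $t\to1$; this does not affect the proof, because finiteness is not needed for your monotone-convergence limit and in any case follows from $\int_b^\infty z\,c(z)\,{\rm d}z\le{\mathbb E}M=1$.
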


\proof Throughout we let  $f=f^*$, and $g=g^*$ its generalised inverse (with a sole break point at $z=b^*$).
Multiplying (\ref{cIdentity}) by $z$ and integrating over
$[b,\infty)$ gives
\[
\int_b^\infty z\,c(z)\,{\rm d}z
=
\int_b^\infty z\,g'(z)w(g(z))\,{\rm d}z
-
\int_b^\infty z e^{-z}\,{\rm d}z.
\]
The second integral is standard, for the first
using the change of variables 
\[
z=f(t),\qquad
{\rm d}z=f'(t)\,{\rm d}t,
\qquad
g'(z)=\frac1{f'(g(z))},
\]
we obtain
\begin{eqnarray*}
\int_b^\infty z\,g'(z)w(g(z))\,{\rm d}z&=&\int_0^1f(t)\,g'(f(t))\,w(t)\,f'(t)\,{\rm d}t\\
&=&\int_0^1f(t)\,\frac{1}{f'(t)}\,w(t)\,f'(t)\,{\rm d}t\;=\;
\int_0^1 f(t)w(t)\,{\rm d}t.
\end{eqnarray*}
\endpf

\subsection{A  time range extension and the sum rule}

It is more delicate to assess the impact of arrival at  a given time, as we need to operate with the concept of an atom `uniformly distributed' on the infinite halfline.
In the next theorem we formalise the procedure for the initial time $t=0$, to derive a `sum rule' identity,  dual to the key identity (\ref{key-ML}).

Let $(\tau,\xi)$ be a stopping point directed by a threshold curve $f$, with $J=J(f)$, $b=f(0)$.
 To construct a variation, we extend  the basic time interval $[0,1]$ to the left by a small increment $\delta > 0$, 
and consider the PPP in the wider time range  $[-\delta, 1]$. Let
$$
(\tau^\delta,\xi^\delta):=(\tau,\xi) {\bf 1}(Y>b)+(\alpha,Y){\bf 1}(Y\leq b),
$$
where  $Y\stackrel{d}{=}{\rm Exponential}(\delta)$ is the  minimum point arriving at  independent time $\alpha$ uniformly over $[-\delta, 0]$.
We denote $J^\delta$ the best-choice probability achieved by $\tau^\delta$ operating on the extended PPP.
In terms of the original PPP on $[0,1]\times
[0,\infty)$ the modified threshold curve is obtained by a bivariate scaling with unit 
Jacobian and preservation of the order of atoms.

Recall that the winning event with $\tau$ has the twofold representation $\{\tau=T\}=\{\xi=M\}$.

\begin{theorem} For threshold rule $\tau$ the variation by the left temporal extension is
\begin{equation} \label{VerEx}
\frac{{\rm d}J^\delta}{{\rm d}\delta}{\Big |}_{\delta=0}=1-e^{-b}-b J(f)-{\mathbb E}((M-b)_+{\bf 1}(M=\xi)).
\end{equation}
\end{theorem}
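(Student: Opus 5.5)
The plan is to compute $J^\delta$ exactly, up to $o(\delta)$, by conditioning on the minimum $Y$ of the extra strip $[-\delta,0]\times[0,\infty)$. This $Y$ is independent of the original PPP $\Pi$ on $[0,1]\times[0,\infty)$, and hence of the pair $(\tau,\xi)$ and of the overall minimum $M$ of $\Pi$. In the extended process the overall minimum is $\min(Y,M)$. Since $\alpha<0$ precedes every atom of $\Pi$, the rule $\tau^\delta$ wins in exactly two situations: either it stops at $(\alpha,Y)$ and $Y<M$, or it uses $\tau$ and $\xi=M<Y$. So I would write
$$
J^\delta=\prob[Y\le b,\;Y<M]+\prob[Y>b,\;\xi=M,\;M<Y].
$$
The left extension is built so that no further Palm-type argument is needed, because the arrival in the new strip is summarised entirely by the single independent variable $Y$.

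For the first term, condition on $Y=y\le b$. Success means that $\Pi$ has no score below $y$, which has probability $e^{-y}$ since $M$ is unit exponential. Integrating against the density $\delta e^{-\delta y}$ gives
$$
\prob[Y\le b,\;Y<M]=\int_0^b \delta e^{-\delta y}e^{-y}\,{\rm d}y=\delta(1-e^{-b})+O(\delta^2).
$$

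For the second term, condition on $\Pi$. On the event $\{\xi=M\}$ we need $Y>\max(b,M)$, which has conditional probability $e^{-\delta\max(b,M)}$. Hence
$$
\prob[Y>b,\;\xi=M,\;M<Y]={\mathbb E}\bigl[{\bf 1}(\xi=M)\,e^{-\delta\max(b,M)}\bigr].
$$
Expanding $e^{-\delta m}=1-\delta m+O(\delta^2m^2)$ and using $\max(b,M)=b+(M-b)_+$, this equals
$$
J(f)-\delta bJ(f)-\delta\,{\mathbb E}\bigl((M-b)_+{\bf 1}(M=\xi)\bigr)+O(\delta^2).
$$
The remainder is $O(\delta^2)$ because $0\le 1-e^{-\delta m}-\delta m+\ldots$ is controlled by $\delta^2m^2/2$, and ${\mathbb E}[(b+M)^2]<\infty$ since $M$ is exponential. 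Adding the two contributions, subtracting $J^0=J(f)$, dividing by $\delta$ and letting $\delta\to0$ gives (\ref{VerEx}).

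The main point needing care is not the asymptotics, which are elementary, but the consistency of the construction. I have to check that the rule $\tau^\delta$ on $[-\delta,1]$ is a legitimate memoryless threshold rule whose best-choice probability can be compared with $J(f)$. This means checking two things. First, before time $0$ the rule accepts exactly the scores below $b$, and only the minimum of the strip matters, since any atom below $b$ in the strip is accepted at its arrival and the first such atom is the minimum to within $o(\delta)$. Second, after time $0$ it acts as $\tau$. I would note that, up to events of probability $o(\delta)$ (two atoms below $b$ in the strip), $\tau^\delta$ coincides with the threshold rule $b\,{\bf 1}(t<0)+f(t)\,{\bf 1}(t\ge0)$ on $[-\delta,1]$. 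The affine rescaling of time by $1/(1+\delta)$ and of scores by $(1+\delta)$ has unit Jacobian and maps this back to an admissible threshold curve on $[0,1]$, so $J^\delta$ is indeed the value of a threshold variation of $f$, as stated before the theorem.
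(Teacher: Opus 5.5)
Your proof is correct and follows the paper's own argument. Both condition on the minimum $Y$ of the added strip, split into $Y\le b$ versus $Y>b$, and expand to first order in $\delta$. The only difference is cosmetic: you integrate out $Y$ first, get ${\mathbb E}[{\bf 1}(\xi=M)e^{-\delta\max(b,M)}]$, and Taylor-expand directly. The paper instead integrates the winning density $e^{-F(t)}K(t,x)$ against $\delta e^{-\delta y}$ and integrates by parts, which is the same computation with the order of integration swapped.
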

\proof
 The impact of a small strip 
on the best choice probability of $\tau^\delta$ is only through $Y$,  and we can set $\alpha=0$,   up to $o(\delta)$.
  Splitting the range of $Y$ at $b$, we may have $\tau^\delta$ winning by stopping on the score $Y\leq b$, or by stopping below $Y>b$, which gives using
 integration by parts and notation  $K(t,x):=e^{-G(x)-x(1-t)}$:
\begin{eqnarray*}
J^\delta=\int_0^b \delta e^{-\delta y}e^{-y}{\rm d}y +\int_b^\infty \delta e^{-\delta y}{\rm d}y \int_0^1 \int_0^{f(t)\wedge y}e^{-F(t)}K(t,x){\rm d}x{\rm d}t +o(\delta) =\\
\delta (1-e^{-b})+
\int_b^\infty \delta e^{-\delta y}{\rm d}y   \left( J-   \int_0^1 \int_{f(t)\wedge y}^{f(t)}e^{-F(t)}K(t,x){\rm d}x{\rm d}t \right)+o(\delta)=\\
\delta (1-e^{-b})+J- \delta b J-\delta \int_0^1 e^{-F(t)} \int_b^{f(t)}\int_b^{x}e^{-\delta y} K(t,x){\rm d} y{\rm d}x{\rm d}t+o(\delta)=\\
J+\delta\left(
 1-e^{-b} -bJ-  \int_0^1 e^{-F(t)}\int_b^{f(t)}(x-b) K(t,x){\rm d}x{\rm d}t\right)+o(\delta).
\end{eqnarray*}
The integral term in the brackets is the truncated mean of $M$ on the winning event of $\tau$.
\endpf

\begin{theorem}[Sum rule]\label{thm:sumrule}
The optimal threshold $f^*$ and its temporal winning rate $w^*$ satisfy
\begin{equation}\label{SumRule}
\int_0^1 f^*(t)w^*(t)\,{\rm d}t=1.
\end{equation}
\end{theorem}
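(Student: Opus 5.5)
The plan is to combine three earlier results: the left temporal extension formula (\ref{VerEx}), the tail-moment identity (\ref{TailMoment}) of Proposition \ref{TailMom}, and the initial balance (\ref{BaB}). The new ingredient is to show that for $f=f^*$ the variation (\ref{VerEx}) vanishes. The reason is self-similarity of the PPP. Extending the time range to $[-\delta,1]$ and applying the bivariate rescaling with unit Jacobian described before (\ref{VerEx}) maps the extended problem onto the original problem on $[0,1]\times[0,\infty)$, with the order of times and scores preserved. Hence the optimal value of the extended problem is again $v$. The rule $\tau^\delta$ is admissible there; after rescaling it becomes a threshold rule with some modified curve $f^\delta$ and $J^\delta=J(f^\delta)\le v$, with equality at $\delta=0$.

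The step I expect to be the main obstacle is turning this one-sided inequality into a vanishing derivative. I would extend the family $f^\delta$ to small negative $\delta$ in the natural way: cut off the initial time strip of length $|\delta|$ and rescale, with the atoms below $b$ in the removed strip handled as in the construction of $\tau^\delta$. I would then check that $\delta\mapsto J(f^\delta)$ is differentiable at $0$, using the explicit representation (\ref{PwinG}). Since $J(f^\delta)\le v=J(f^0)$ on a two-sided neighbourhood of $0$, the derivative is zero. If the two-sided construction proves awkward, an alternative is to verify by a direct computation that the right derivative given by (\ref{VerEx}) equals the left one. Either way, at $f=f^*$ the conclusion is
\[
1-e^{-b}-bv-{\mathbb E}\big((M-b)_+{\bf 1}(M=\xi)\big)=0,\qquad b=f^*(0).
\]

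It remains to express the truncated mean through the marginal winning rates. Since $c(z)$ is the density of $M$ on the winning event for $z>b$, we have
\[
{\mathbb E}\big((M-b)_+{\bf 1}(M=\xi)\big)=\int_b^\infty z\,c(z)\,{\rm d}z-b\int_b^\infty c(z)\,{\rm d}z .
\]
By Proposition \ref{TailMom}, the first term equals $\int_0^1 f^*w^*\,{\rm d}t-(1+b)e^{-b}$. The second integral equals ${\mathbb P}[\tau^*=T,\ M>b]=v-J_0(f^*)$, because $J_0$ is exactly the winning probability with the minimum below the initial threshold. By the balance at the initial threshold (\ref{BaB}), $J_0(f^*)=e^{-b}$.

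Substituting these into the stationarity relation gives
\[
\int_0^1 f^*w^*\,{\rm d}t=1-e^{-b}-bv+(1+b)e^{-b}+b\,(v-e^{-b})=1,
\]
which is (\ref{SumRule}). Apart from the differentiability and two-sidedness issue, all steps are bookkeeping with results already established.
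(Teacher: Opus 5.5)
Your proposal is correct and follows the paper's proof. You set the left-extension variation (\ref{VerEx}) to zero at $f^*$, split the truncated mean as $\int_b^\infty z\,c(z)\,{\rm d}z-b\int_b^\infty c(z)\,{\rm d}z$, and eliminate both pieces using (\ref{TailMoment}) and (\ref{BaB}); the algebra is the same as the paper's, which only groups the terms through $m$. The paper simply asserts that the variation vanishes. Your two-sided family $f^\delta(s)=(1+\delta)f^*((1+\delta)s-\delta)$, with $f^*\equiv b$ extended to negative times, justifies that step. For $\delta<0$ this family is plain truncation and rescaling, so the atoms in the removed strip need no special handling. The differentiability check also goes through: both one-sided derivatives reduce to (\ref{VerEx}) because $f^*$ is right-continuous at $0$.
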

\proof
Using the balance at the initial threshold  (\ref{BaB}) and
  (\ref{TailMoment}), equating the vertical variation  (\ref{VerEx}) to zero gives
$$
0=
1-(1+b)e^{-b}-m+\int_0^b z\,c(z)\,{\rm d}z,$$
where
\[m:=\int_0^\infty z\,c(z)\,{\rm d}z\]
is the mean of the minimum $M$ on the winning event. Splitting
\[
m=\int_0^b z\,c(z)\,{\rm d}z+
\int_b^\infty z\,c(z)\,{\rm d}z,
\]
and substituting (\ref{TailMoment}), we obtain (\ref{SumRule}).
\endpf

\subsection{An optimal control problem}

This section presents some dynamic programming heuristics, however making them rigorous is left for future work.

For any fixed time $t_0$ reachable by the stopping rule directed by a given threshold curve $f$, we have a division
of $[0,t_0]\times [0,\infty)$ in the epigraph $A_{t_0}(f)$ and subgraph $B_{t_0}(f)$ of the function. The reachability condition ${\mathbb P}[\tau\geq t_0]>0$ holds if the area of  $B_{t_0}(f)$ is finite.

This prompts us to consider as a state at time $t_0$ an arbitrary  measurable domain $B\subset [0,t_0]\times [0,\infty)$, thought of as `explored' domain where atoms of $\Pi$ have not been found,
 that is  $\Pi\cap B =\varnothing$. This can be interpreted as a prior information of the observer willing to make the overall best choice starting the search at time $t_0$. In the extreme case
$B=\varnothing$ or $B=[0,t_0)\times[0,\infty)$ the choice problem is essentially the same for every $t_0\in[0,1)$ due to the self-similarity of the PPP. Adjusting  Theorem  \ref{thr} to the PPP setting,
we know that a  memoryless stopping rule $\tau\geq t_0$ optimal relative to the starting position $(t_0, B)$ is of the threshold form.
The threshold value at given state appears as a control variable in the variational problem with nonlocal objective.

The arguments that lead us to the balance at the boundary condition  (\ref{IntEq}) are applicable for any initial state $(t_0, B)$. Furthermore, analysis of the proof of the terminal asymptotics shows that
(\ref{term}) holds for a stopping rule optimal starting from any given state at time $t_0<1$.

The monotonicity of thresholds suggests taking as state variable a convex function $F|_t$ on the interval $[0,t]$,
to fit  the choice problem in the familiar framework of control theory. For state $(t,F|_t)$, we define the continuation value
$$
V(t, F|_t)=\sup {\mathbb P}[\tau=T],
$$
where the supremum is taken over the memoryless stopping rules $\tau>t$ directed by a function $F$ satisfying $F|_{[0,t]}=F|_t$.
For a control variable  $\varphi\ge0$, the state evolves according to
\[
F|_{t+dt}
=
F|_t+\varphi\,\mathbf 1_{[t,1]}\,dt .
\]
The dynamic programming equation is formally
\begin{align*}
0&=\sup_{\varphi\ge0}\Biggl\{
-\partial_tV(t,F|_t)
-\varphi\,V(t,F|_t)\\
&\qquad+\int_0^{\varphi}
\max\,\!\Bigl[\exp \bigl(-G_{F|_t}(x)-x(1-t)\bigr),\,V(t,F|_t)\Bigr]\,{\rm d}x\Biggr\},
\end{align*}
 where $\varphi$ is the current acceptance intensity compensating the risk.
Equivalently, $\varphi$  is the value of the threshold $f(t)=F'(t)$ applied on the next
infinitesimal time interval $[t,t+{\rm d}t]$.

We expect that in general
$V(t,F|_{t})$ has  a positive drift, reflecting the possibility of improving future decisions. The optimality equation identifies the control for which this drift vanishes.
Consequently, one expects that along the optimal trajectory $F^*$ the process
$V(t,F^*|_{t})$
is a martingale in the filtration generated by the optimal threshold, or equivalently in the eigenfiltration of the stopping rule $\tau^*$. In this interpretation, the balance equation
(\ref{IntEq})
becomes the local indifference condition guaranteeing  zero drift under the optimal control.

\section{The endpoint behaviour}

\subsection{Terminal threshold asymptotics}
In the FI problem the optimal threshold curve is the hyperbola $\widehat{f}^*(t)=c_0/(1-t)$, with  a simple pole at $t=1$. We proceed to show that in the ML case
the singularity is logarithmic.

A basic observation on the terminal behaviour of $f^*$, as  $t\to 1$, is that the function is unbounded. Indeed, stopping at the boundary point $(t,f^*(t))$ is successful with probability 
at least $\exp(-f^*(t))$, while the probability of subsequent stopping at some arrival is at most $1-\exp(-(1-t)f^*(t))$. Thus the balance on the boundary cannot be achieved if $f^*$ has a finite limit. 

For finer asymptotic relations as $t\to 1$ or $x\to\infty$ we will  use the notation $\sim$ to denote equivalence or expansion, and $\asymp$ to denote a strict order of magnitude.
For arbitrary nondecreasing c{\'a}dl{\'a}g threshold curve $f$ with $f(1-)=\infty$,  generalised inverse $g$, 
and the convex-conjugate $G$,
by the elementary calculus
\begin{eqnarray}\nonumber
g(x)&\to& 1, ~~G(x)\sim x,\\
f(t)-F(t)&\sim &f(t),~~e^{-(f(t)-F(t))}\sim e^{-f(t)}.\label{el-est}
\end{eqnarray}

\begin{lemma}\label{cont-asymp}     As $t\to 1$
$$e^{F^*(t)}J(f^*;t)\asymp 1-t.$$

\end{lemma}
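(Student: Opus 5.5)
Recall that $e^{F^*(t)}J(f^*;t)$ is the success probability of continuing with $\tau^*$ after rejecting a boundary arrival at $(t,f^*(t))$. By the balance equation of Theorem \ref{balance} it equals $e^{F^*(t)-f^*(t)}$. We must show this quantity is bounded above and below by constant multiples of $1-t$. We work directly with the integral
$$
e^{F(t)}J(f;t)=\int_t^1 e^{-(F(u)-F(t))}\int_0^{f(t)}\exp\bigl(-x(1-u)-G(x)\bigr)\,{\rm d}x\,{\rm d}u ,
$$
writing $f=f^*$ and $F=F^*$ throughout.

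\emph{Upper bound.} Bound $e^{-(F(u)-F(t))}\le 1$ and $e^{-x(1-u)}\le 1$. This gives
$$
e^{F(t)}J(f;t)\le (1-t)\int_0^\infty e^{-G(x)}\,{\rm d}x .
$$
The integral is finite because $G(x)\sim x$ by (\ref{el-est}); indeed $G(x)\ge x g(x_0)-G(x_0)$ for $x\ge x_0$, since $g$ is nondecreasing with $g(x)\to 1$. Hence $e^{F(t)}J(f;t)=O(1-t)$.

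\emph{Lower bound.} Restrict the inner integral to $x\in[0,b]$, where $b=f(0)>0$, so that $G(x)=0$ there. Then
$$
e^{F(t)}J(f;t)\ \ge\ \int_t^1 e^{-(F(u)-F(t))}\,\frac{1-e^{-b(1-u)}}{1-u}\,{\rm d}u .
$$
The factor $(1-e^{-b(1-u)})/(1-u)$ is at least $b e^{-b}$, so it remains to show that $\int_t^1 e^{-(F(u)-F(t))}{\rm d}u\gtrsim 1-t$. The danger is that $F(u)-F(t)$ may be large on $[t,1]$ because $f$ blows up. We expect this step to be the main obstacle, and we handle it with the balance equation itself, via a bootstrap. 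By the upper bound, $e^{-f(t)+F(t)}=e^{F(t)}J(f;t)\le C(1-t)$, so $f(t)-F(t)\ge \log\frac1{1-t}-C'$. We also need an upper growth estimate on $f$. For this we use the lower bound already secured on a shorter window: the integrand is bounded below by $be^{-b}e^{-(u-t)f(u)}$, and restricting to $u\in[t,t+1/f(t+\epsilon)]$ gives at least a fixed positive constant over a window of length $\min(1-t,\,1/f(\cdot))$. Combined with the balance identity $e^{F(t)-f(t)}=e^{F(t)}J(f;t)$, this shows $f(t)\le \log\frac1{1-t}+O(\log\log)$. In particular $(1-t)f(u)\to 0$ for $u$ in a set of proportional length, say $u\in[t,(1+t)/2]$. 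Therefore, $F(u)-F(t)\le (u-t)f(u)\le (1-t)f\bigl((1+t)/2\bigr)=o(1)$ on that subinterval, so $\int_t^1 e^{-(F(u)-F(t))}{\rm d}u\ge c(1-t)$. If the bootstrap turns out to be awkward, a fallback is to argue by contradiction. Suppose $e^{F}J\le\varepsilon_n(1-t_n)$ along a sequence $t_n\to1$ with $\varepsilon_n\to0$. Then balance forces $f(t_n)\ge \log\frac1{\varepsilon_n(1-t_n)}$, i.e.\ a threshold much larger than needed. A perturbation lowering $f$ near $t_n$ (the tail-shift variation of Section \ref{Shift}) would then strictly increase $J$, contradicting optimality.

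Combining the two bounds gives $e^{F^*(t)}J(f^*;t)\asymp 1-t$.
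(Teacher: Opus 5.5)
\textbf{The gap is in your lower bound, at the step you yourself flagged.}

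Your upper bound is fine and is essentially the paper's, which bounds the continuation by the chance that $(T,M)$ arrives after $t$. One auxiliary inequality is misstated: $G(x)\ge xg(x_0)-G(x_0)$ fails already for $f\equiv 1$. Use $G(x)\ge (x-x_0)g(x_0)$ instead.

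For the lower bound, your window estimate combined with the balance equation only gives $f(t)-F(t)\le C+\log\max\{(1-t)^{-1},\epsilon^{-1},f(t+\epsilon)\}$. To get $f(t)\le\log\frac{1}{1-t}+O(\log\log\frac{1}{1-t})$ from this, you would need two further facts:
\begin{itemize}
\item $F(t)$ bounded, i.e.\ integrability of $f^*$;
\item $\log f(t+\epsilon)=O(\log\log\frac1{1-t})$, a growth bound at a \emph{later} time.
\end{itemize}
Neither is available. The first is what the paper deduces \emph{from} this lemma (Theorem~\ref{log-at-1}). The second is the kind of estimate you are trying to establish. So the bootstrap is circular as written.

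The fallback does not work either. At the optimum every tail-shift first variation vanishes identically; this is exactly (\ref{BalanceInt}). Lowering $f$ near $t_n$ therefore gives no first-order gain, and you supply no quantitative finite perturbation in its place.

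\textbf{How the paper avoids this, and how to repair your route.}

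The paper never estimates the survival factor $e^{-(F(u)-F(t))}$. Its lower bound comes from optimality, by comparing with the feasible continuation ``stop at the first arrival below $b$'', which is worth $\sim b(1-t)$. Made precise, the argument runs as follows.
\begin{itemize}
\item The memoryless (non-monotone) rule with threshold $f^*$ on $[0,t)$ and $b$ on $[t,1)$ cannot beat $v$. Hence $V(t):=\mathbb{P}[\tau^*=T\mid\tau^*\ge t]\ge\int_t^1 e^{-b(u-t)}\frac{1-e^{-b(1-u)}}{1-u}\,{\rm d}u\sim b(1-t)$.
\item The difference $V(t)-e^{F^*(t)}J(f^*;t)$ is the chance of winning above $f^*(t)$ after $t$. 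It is at most $(1-t)\int_{f^*(t)}^\infty e^{-G^*(x)}\,{\rm d}x=o(1-t)$, since $f^*$ is unbounded.
\end{itemize}

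Alternatively, your own route can be closed without any growth estimate.
\begin{itemize}
\item Your restricted bound plus the balance equation give $e^{-f(t)}\ge (1-e^{-b})\,I(t)$, where $I(t):=\int_t^1 e^{-F(u)}\,{\rm d}u$. Thus $f\le C+\log(1/I)$ with $C:=-\log(1-e^{-b})$.
\item Suppose $F(1)=\infty$. Then $e^{-F(s)}=\int_s^1 f e^{-F}\le\int_0^{I(s)}\bigl(C+\log\frac1y\bigr)\,{\rm d}y$. Equivalently, $-I'\le I\,\bigl(C+1+\log\frac1I\bigr)$.
\item Gronwall then keeps $\log(1/I)$ bounded near $1$. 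This contradicts $I(t)\le 1-t$.
\end{itemize}
Hence $F(1)<\infty$, and $\int_t^1 e^{-(F(u)-F(t))}\,{\rm d}u\ge e^{-F(1)}(1-t)$, which is the lower bound you need.
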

\proof   We condition on $\{\tau>t\}$ and observation $(t,f^*(t))$, for $t$ close to $1$.
A feasible continuation strategy is to stop at the first arrival below the initial threshold $b$, winning with probability asymptotic to $b(1-t)$,
which gives a lower bound.
The success probability by continuation does not exceed the probability that the total minimum $(T,M)$ arrives after $t$. 
For $x<f^*(t)$, given $M=x, \tau^*>t$, the distribution of $T$ is uniform on $[0,g^*(x)]\cup [t,1]$. From this
$${\mathbb P}[T>t\,|\, M\leq f^*(t), \tau^*>t]=\int_0^{f^*(t)} e^{-G^*(x)-x(1-t)}  \frac{1-t}{1-t+g^*(x)}{\rm d}x.$$   
The integral over a fixed interval $[0,B]$ has the strict order of $1-t$ due to the exponent, and over $[B,f^*(t)]$ the order is the same due to the factor $1-t$.
With two-sided bounds of the same claimed order, the lemma follows.
\endpf

\begin{theorem}\label{log-at-1}  As $t\to 1$
\begin{equation}\label{term}
f^*(t)\sim -\log (1-t).
\end{equation}
Consequently, $f^*$ is integrable, that is $F^*(1)<\infty$, and ${\mathbb P}[\tau^*=1]=e^{-F^*(1)}>0$.
\end{theorem}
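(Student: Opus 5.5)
We combine the balance at the boundary equation (\ref{IntEq}) with Lemma \ref{cont-asymp}. Multiplying (\ref{IntEq}) by $e^{F^*(t)}$ gives
$$
e^{-(f^*(t)-F^*(t))}=e^{F^*(t)}J(f^*;t).
$$
The left-hand side is the success probability of immediate stopping at the boundary point, and the right-hand side is the continuation value. By Lemma \ref{cont-asymp} the right-hand side is of strict order $1-t$. Taking logarithms,
$$
f^*(t)-F^*(t)=-\log(1-t)+O(1),\qquad t\to1.
$$
The task is therefore to show that $F^*(t)$ is negligible compared with $f^*(t)$, and in fact bounded. With that in hand, $f^*(t)=-\log(1-t)+O(1)\sim-\log(1-t)$.

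The plan is to prove boundedness of $F^*$ with a self-improving (bootstrap) argument. The relation (\ref{el-est}) already gives $F^*(t)=o(f^*(t))$: since $f^*$ is nondecreasing, $F^*(t)\le t f^*(t)$, and more precisely $F^*(t)/f^*(t)\to$ a limit below $1$ once $f^*$ is unbounded. The argument then runs in three steps.

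\begin{enumerate}
\item From $(f^*-F^*)(t)=-\log(1-t)+O(1)$ and $F^*\le t f^*$, we get $(1-t)f^*(t)\le -\log(1-t)+O(1)$. This crude bound already shows $f^*(t)=O\bigl(-\log(1-t)/(1-t)\bigr)$.
\item A better way to control $F^*$ is the identity $(f-F)'=(1-t)f'\cdot\frac{t}{1-t}$, i.e. $(tf-F)'=tf'$. Instead of using it directly, write $F^*(t)=\int_0^t f^*$. Set $\varphi(t):=f^*(t)-F^*(t)$, so that $\varphi=-\log(1-t)+O(1)$. The derivative satisfies $(e^{-t}F^*)'=e^{-t}\varphi$, hence
$$
F^*(t)=e^{t}\int_0^t e^{-s}\varphi(s)\,{\rm d}s.
$$
\item Since $-\log(1-s)$ is integrable on $[0,1)$, this formula shows that $F^*$ stays bounded as $t\to1$. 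It follows that $F^*(1)<\infty$ and $f^*(t)=\varphi(t)+F^*(t)=-\log(1-t)+O(1)$, which is (\ref{term}).
\end{enumerate}

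The final assertion then follows from (\ref{MLtaup}): ${\mathbb P}[\tau^*=1]=e^{-F^*(1)}>0$.

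The main obstacle is making the use of Lemma \ref{cont-asymp} legitimate on the logarithmic scale. One point to verify is that the implied constants in $\asymp$ do not depend on $t$; the lemma's two-sided bounds $b(1-t)$ and $C(1-t)$ provide exactly this. The other point is that the lemma's upper bound itself relied on $G^*(x)$ growing like $x$, i.e. $g^*(x)\to1$, which follows from $f^*$ being unbounded, as established just before the lemma.

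A secondary subtlety is that (\ref{IntEq}) holds for all $t\in[0,1)$ only if $f^*$ is continuous. If $f^*$ has jumps, we apply the balance condition at continuity points, or in its left-limit form. This suffices for the integral representation of $F^*$ above, since $f^*$ is monotone and $F^*$ is its primitive, so the ODE for $F^*$ holds almost everywhere.
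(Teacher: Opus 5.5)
Your proof is correct. Its first half is the paper's: multiply (\ref{IntEq}) by $e^{F^*(t)}$, apply Lemma~\ref{cont-asymp}, and take logarithms to get $\varphi=f^*-F^*=-\log(1-t)+O(1)$. Where you differ is in how $F^*$ is eliminated.

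The paper uses the relation $f-F\sim f$ from (\ref{el-est}), i.e.\ $F=o(f)$ for any nondecreasing $f$ with $f(1-)=\infty$. This gives the ratio asymptotics $f^*\sim-\log(1-t)$ first. Only then does it get $F^*(1)<\infty$, from integrability of $-\log(1-t)$.

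You instead solve the linear equation $F'-F=\varphi$, $F(0)=0$. Then $F^*(t)=e^{t}\int_0^t e^{-s}\varphi(s)\,{\rm d}s$ is bounded because $\varphi$ is integrable. This gives boundedness first, and with it the sharper additive form $f^*(t)=-\log(1-t)+O(1)$ in one stroke, which is the natural stepping stone to Proposition~\ref{pro-q}. It also sidesteps the exponential relation $e^{-(f-F)}\sim e^{-f}$ in (\ref{el-est}). That relation holds only in the $\asymp$ sense and only when $F(1)<\infty$, so it tacitly presupposes the finiteness being proved; the paper's argument is fine once read on the logarithmic scale.

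The paper's route is shorter, and you already had it in hand: your remark that $F^*=o(f^*)$ closes the proof by itself.

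Minor slips to remove:
\begin{itemize}
\item $(f-F)'=f'-f$, not $tf'$. Only $(tf-F)'=tf'$ is right, and you use neither.
\item $F^*/f^*\to0$, not merely ``a limit below $1$''.
\item Step~1 is never used.
\end{itemize}
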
 
\proof
By (\ref{el-est}) and Lemma \ref{cont-asymp}, the asymptotic balance at the boundary condition becomes 
$e^{-f^*(t)}\asymp 1-t.$
Passing to logarithms we are done.
\endpf

\subsection{Winning rates}

The plot of asymptotic FI winning rate in  \cite{GM} (Figure 3, p. 58) shows two endpoint values, that were left unexplained by the originators of the problem. 
In our notation these are

\begin{equation}\label{GMend}
\widehat{w}^*(0)=1-e^{-c_0}, ~~~\widehat{w}^*(1)=e^{-c_0}.
\end{equation}
The initial value $\widehat{w}(0)=1-e^{-b}$ is valid for arbitrary threshold curve with $b=f(0)$, in particular for the single-level strategy. The terminal value is specific for 
the optimal stopping rule and was justified by the complete winning rate formula \cite{GMir}.
Note that this very terminal  winning rate is also valid conditionally on the edge arrival $(0,c_0)$, after which $\widehat{\tau}^*$ acts with the single threshold
level $c_0$.

\begin{theorem} For arbitrary threshold curve the terminal winning rate is
\begin{equation}\label{AMendW}
w(1)=e^{-F(1)}\int_0^{f(1)} e^{-G(x)}{\rm d}x.
\end{equation}
\end{theorem}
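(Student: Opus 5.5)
The plan is to work directly from the definition of the temporal winning rate, $w(t)=\int_0^{f(t)}W(t,x)\,{\rm d}x$, with $W$ given by (\ref{WrateML}). This gives
$$
w(t)=e^{-F(t)}\int_0^{f(t)} e^{-G(x)-x(1-t)}\,{\rm d}x ,
$$
and the task is to let $t\to1$. Here $w(1)$ should be read as the limit $w(1-)$. If $f(1-)=f(1)<\infty$ nothing changes, since the formula is simply evaluated at $t=1$.

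First, I would handle the prefactor. Because $f\ge0$ is locally integrable on $[0,1)$, $F$ is nondecreasing and $e^{-F(t)}\to e^{-F(1)}$, where $F(1)\in[0,\infty]$. When $F(1)=\infty$ both sides of (\ref{AMendW}) should be interpreted as $0$. For that case I would still need the integral factor to stay bounded, and this follows from the bound below.

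Second, I would show the integrand converges monotonically. For fixed $x$, the function $\exp(-G(x)-x(1-t))\mathbf 1(x\le f(t))$ is nondecreasing in $t$, because both the factor $e^{-x(1-t)}$ and the indicator increase. Its pointwise limit is $e^{-G(x)}\mathbf 1(x<f(1-))$, up to the single boundary point. Monotone convergence then gives
$$
\int_0^{f(t)}e^{-G(x)-x(1-t)}\,{\rm d}x\;\to\;\int_0^{f(1)}e^{-G(x)}\,{\rm d}x .
$$

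The main point to check is that this limit is finite when $f(1)=\infty$, which is the case relevant for the optimal curve by Theorem \ref{log-at-1}. Finiteness is what makes the formula meaningful rather than $0\cdot\infty$. Since $g(x)\to1$ as $x\to\infty$, by (\ref{el-est}), we have $G(x)\ge x/2-C$ for large $x$. Hence $e^{-G}$ is integrable on $[0,\infty)$, and in fact the integral is bounded by $\int_0^\infty e^{-G(x)}\,{\rm d}x<\infty$ uniformly in $t$. Multiplying the two limits then yields (\ref{AMendW}).

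As a sanity check, I would compare this with the FI endpoint values (\ref{GMend}). For the single-level rule $f\equiv b$ we have $G\equiv0$ on $[0,b]$, so the formula gives $w(1)=e^{-b}\,b$. This agrees with the direct computation $e^{-b}\int_0^b e^{0}\,{\rm d}x$, which confirms the formula in that case.
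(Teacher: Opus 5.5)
Your argument is correct, but it takes a different route from the paper. The paper proves (\ref{AMendW}) by a horizon-extension variation, parallel to the left extension behind (\ref{VerEx}). It appends a strip $[1,1+\delta]$ whose minimum $Y$ has density $\delta e^{-\delta y}$ and is accepted when $Y\le f(1)$. The probability of winning in the strip is then $e^{-F(1)}\int_0^{f(1)}\delta e^{-\delta y}e^{-G(y)}\,{\rm d}y$, where $e^{-F(1)}$ is survival to the horizon and $e^{-G(y)}$ is the probability that no atom below $y$ fell in the epigraph.

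You instead take the interior winning rate $w(t)=\int_0^{f(t)}W(t,x)\,{\rm d}x$ from (\ref{WrateML}) and let $t\uparrow1$ by monotone convergence. Integrability of $e^{-G}$, which follows from $g(x)\to1$, rules out a $0\cdot\infty$ form.

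What each route buys:
\begin{itemize}
\item Yours is shorter and fully rigorous once (\ref{WrateML}) is granted. It also identifies $w(1)$ as the left limit of $w(t)$, which is exactly how the value is used as the endpoint in Figure~\ref{MLwr} and in the proof of Proposition~\ref{pro-q}.
\item The paper's gives $w(1)$ an independent operational meaning as the rate of winning just beyond the horizon, with no appeal to continuity of $w$. It also fits the paper's theme of pathwise variations.
\end{itemize}

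One small point: your sanity check is circular, since $e^{-b}\int_0^b e^{0}\,{\rm d}x$ is just the formula again. A genuine check for $f\equiv b$ is $w(t)=e^{-bt}\bigl(1-e^{-b(1-t)}\bigr)/(1-t)\to be^{-b}$. The comparison with (\ref{GMend}) is beside the point, because $be^{-b}$ for a single-level rule is not the FI-optimal value $e^{-c_0}$.
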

\proof
The variational argument follows the idea of time-range  extension in the proof of (\ref{VerEx}), but now we
 extend the horizon to the right by $\delta$ and integrate out exponential $Y$ with rate $\delta$ over the threshold range $[0,f(1)]$ (which may be finite). 
In the limit, the `uniform' point $Y$ is the overall minimum, if it is achieved
by $\tau$ (probability $\exp(-F(1))$) and the PPP has no atoms above the threshold, below $Y$, which is the integrated avoidance probability.
\endpf
\noindent
The following example  demonstrates  the line of argument in the simplest situation.
\begin{ex}{\rm 
Consider a single-level rule  with constant threshold \(f(t)\equiv b\). Then
\[F(1)=b,\qquad
g(x)={\bf 1}(x\geq b),\qquad G(y)=(y-b)_+.\]
We extend the horizon by a small interval
of length \(\delta\). The probability that the search reaches time \(1\) is
$e^{-F(1)}=e^{-b}.$
The minimum score \(Y\) appearing in the
added strip has density
$\delta e^{-\delta y}\,dy .$
Given \(Y=y<f(1)\), this point is a new global minimum iff the previously observed
region contains no point below level \(y\), which has probability \(e^{-G(y)}\). Thus the probability to win in the added strip is
$$
e^{-b}\int_0^{f(1)}\delta e^{-\delta y} e^{-G(y)}\,dy,
$$
whence
$$
w(1)=
e^{-b}\int_0^{f(1)} e^{-G(y)}\,dy =e^{-b}\int_0^b 1\,dy=be^{-b}.
$$
}
\end{ex}
\vskip0.4cm

Thus in the ML setting in full generality
\begin{equation}\label{SMend}
{w}(0)=1-e^{-b}, ~~~{w}(1)=e^{-q},
\end{equation}
where $b=f(0)$ and $e^{-q}$ is the RHS of  (\ref{AMendW}).
Unlike (\ref{GMend}), the sum of the endpoint winning rates for ML optimal $f^*$ exceeds $1$
(see Figure \ref{MLwr}).

\begin{prop}\label{pro-q} An optimal threshold has the asymptotic expansion
$$
f^*(t)\sim -\log(1-t)+q,~~~~~t\to 1,
$$
where $q$ is given by {\rm (\ref{AMendW})} computed for $f^*$.
\end{prop}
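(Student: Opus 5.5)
The plan is to sharpen the order-of-magnitude statement of Lemma \ref{cont-asymp} into a genuine asymptotic equivalence, and then insert it into the balance equation (\ref{IntEq}). We write $f=f^*$, $F=F^*$, $G=G^*$. By Theorem \ref{balance}, $e^{-f(t)}=J(f;t)$ exactly for every $t\in[0,1)$, where
$$
J(f;t)=\int_t^1 e^{-F(u)}\int_0^{f(t)} e^{-x(1-u)-G(x)}\,{\rm d}x\,{\rm d}u .
$$
It therefore suffices to prove
$$
J(f;t)\sim (1-t)\,e^{-F(1)}\int_0^{\infty}e^{-G(x)}\,{\rm d}x=(1-t)\,e^{-q},\qquad t\to1 .
$$
Here we use $f(1-)=\infty$ from Theorem \ref{log-at-1}, so that the upper limit $f(1)$ in (\ref{AMendW}) is $\infty$. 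Taking logarithms in $e^{-f(t)}=J(f;t)$ then gives $f(t)=-\log(1-t)+q+o(1)$, which is the claimed expansion.

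To establish the equivalence, we first note that $q$ is finite. Indeed $F(1)<\infty$ by Theorem \ref{log-at-1}, and $\int_0^\infty e^{-G(x)}{\rm d}x<\infty$ because $G(x)\sim x$ by (\ref{el-est}); more precisely, $g(x)\to1$ gives $G(x)\ge x/2$ for large $x$. Next we control the three factors inside $J(f;t)$ uniformly in $u\in[t,1]$:
\begin{itemize}
\item[(a)] $e^{-F(u)}\to e^{-F(1)}$ uniformly, since $F$ is continuous and finite at $1$;
\item[(b)] for $x\le f(t)$ and $u\ge t$ we have $1-e^{-x(1-u)}\le x(1-u)\le f(t)(1-t)$, and $f(t)(1-t)\to0$ because $f(t)\sim-\log(1-t)$ by Theorem \ref{log-at-1};
\item[(c)] $\int_0^{f(t)}e^{-G(x)}{\rm d}x\to\int_0^\infty e^{-G(x)}{\rm d}x$ by monotone convergence.
\end{itemize}
Combining these bounds, the inner integral equals $(1+o(1))\int_0^\infty e^{-G}$ uniformly in $u\in[t,1]$. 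Integrating over $u\in[t,1]$ then yields the factor $1-t$, which proves the claim.

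The only delicate point is step (b). It requires the terminal growth rate $f(t)(1-t)\to0$, which the crude statement $e^{-f}\asymp 1-t$ of Theorem \ref{log-at-1} already supplies. Without this, the factor $e^{-x(1-u)}$ could not be replaced by $1$ uniformly up to $x=f(t)$. Everything else is dominated or monotone convergence.

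As a consistency check, we compare with (\ref{AMendW}). It gives $w(1)=e^{-q}$, and the balance $e^{-f(t)}\approx(1-t)w(1)$ has a direct interpretation: stopping at the boundary point is balanced against a continuation whose success comes essentially from the terminal winning rate acting over the residual interval of length $1-t$.
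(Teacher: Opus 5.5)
Your proposal is correct and follows essentially the paper's argument: use the balance equation $e^{-f^*(t)}=J(f^*;t)$, show $J(f^*;t)/(1-t)\to e^{-F^*(1)}\int_0^\infty e^{-G^*(x)}\,{\rm d}x=e^{-q}$, and take logarithms. The only difference is technical. You squeeze uniformly in $u$ using $f(t)(1-t)\to0$, whereas the paper rescales $u=t+(1-t)s$ and applies dominated convergence. For that, pointwise convergence $e^{-x(1-t)(1-s)}\to1$ and the integrable bound $e^{-G(x)}$ already suffice, so your step (b) is valid but not really the delicate point you suggest.
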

\proof Throughout in the proof  we treat only an optimal threshold $f=f^*$.
Making the change of variables
\[
u=t+(1-t)s,\qquad 0\le s\le 1,
\]
gives
\[
\frac{J(f;t)}{1-t}
=\int_0^1 e^{-F(t+(1-t)s)}
\int_0^{f(t)} e^{-G(x)-x(1-t)(1-s)}
\,{\rm d}x\,{\rm d}s .
\]
Letting $t\to 1$ we have 
$$
F(t+(1-t)s)\to F(1),
\qquad f(t)\to\infty,~~~
\exp(-G(x)-x(1-t)(1-s))
\to e^{-G(x)}.
$$
By an application of the dominated convergence and (\ref{AMendW})
\[
\lim_{t\uparrow1}\frac{J(f;t)}{1-t}
=
\int_0^1
e^{-F(1)}
\int_0^\infty e^{-G(x)}\,dx\,ds =w(1).
\]
Taking logarithms 
in the  boundary balance equation,
$
e^{-f(t)}=J(f;t),
$
completes the proof.
\endpf
\noindent

\begin{rem}{\rm 
While Theorem \ref{log-at-1} is a rough asymptotics valid for many stopping rules 
close to optimal in the temporal tail,
Proposition \ref{pro-q} highlights $q$ as a global constant  characteristic for optimality.
}
\end{rem}

\begin{figure}[t]
\centering
\begin{tikzpicture}
\begin{axis}[
  width=9.4cm, height=6.6cm,
  xlabel={time $t$}, ylabel={winning rate},
  xmin=0, xmax=1, ymin=0.39, ymax=0.64,
  axis lines=left, tick align=outside, clip=false,
  legend style={
    at={(0.5,1.03)},
    anchor=south,
    draw=none,
    legend columns=-1,
    /tikz/every even column/.append style={column sep=12pt}
  },
  every axis plot/.append style={line width=1pt},
]
\addplot[black,solid]
  table[x=t,y=wML] {fig_winrate_data.dat};
\addplot[black,dashed]
  table[x=t,y=wFI] {fig_winrate_data.dat};
\legend{memoryless $w^*(t)$, full information $\widehat{w}^*(t)$}
\draw[gray,dotted] (axis cs:0,0.4025) -- (axis cs:1,0.4025)
  node[anchor=west,black!60,font=\footnotesize,xshift=2pt] {$e^{-q}=0.4025$};
\draw[gray,dotted] (axis cs:0,0.4474) -- (axis cs:1,0.4474)
  node[anchor=west,black!60,font=\footnotesize,xshift=2pt] {$e^{-c_0}=0.4474$};
\end{axis}
\end{tikzpicture}
\caption{Memoryless (ML) and full-information (FI) winning rates. For ML,
$w^*(0)=1-e^{-b^*}=0.602$ and $w^*(1)=e^{-q}=0.4025$.
For FI,
$ \widehat{w}^*(0)=1-e^{-c_0}=0.5526$ and $\widehat{w}^*(1)=e^{-c_0}=0.4474$.}
\label{MLwr}
\end{figure}

\begin{figure}[t]
\centering
\begin{tikzpicture}
\begin{axis}[
  width=11cm, height=6.6cm,
  xlabel={time $t$}, ylabel={stopping-time density},
  xmin=0, xmax=1, ymin=0, ymax=1.6,
  axis lines=left, tick align=outside,
  legend style={at={(0.5,1.03)},anchor=south,draw=none,legend columns=-1,
                /tikz/every even column/.append style={column sep=12pt}},
  every axis plot/.append style={line width=1pt},
]
\addplot[black,solid]  table[x=t,y=ml] {fig_taudensity_data.dat};
\addplot[black,dashed] table[x=t,y=fi] {fig_taudensity_data.dat};
\legend{memoryless $f^*\,e^{-F^*}$, full information (32) at $b=c_0$}
\end{axis}
\end{tikzpicture}
\caption{ML and FI stopping-time densities. The ML rate $f^*(t)e^{-F^*(t)}$ diverges as $t\to1$ (curve truncated).}
\end{figure}

\section{Numerical analysis and approximations}

This section summarises constants, the simulation of $f^*$ and related distributions, 
and presents a
numerical verification of key identities.

The best-choice probabilities compare as follows

\[
\begin{array}{l c}
\hline
\text{Policy} & \text{Value} \\
\hline
\text{Memoryless optimum } v
    & 0.56205511 \\[1mm]
\text{Single-level optimum } g_0(c_1)
    & 0.51735143 \\[1mm]
\text{Full-information optimum } \widehat v
    & 0.58016422 \\
\hline
\end{array}
\]
abbreviated at eight significant digits. The  ML  value  is a minor correction of $0.56203$ reported 
in \cite{Enns} as an extrapolated limit value of $v_n$ in the imperfect information problem.

\subsection{The optimal ML threshold curve}

The optimal threshold curve $f^*$ was computed by two independent methods, which shared no code and
 agreed to approximately eleven significant digits, providing a strong consistency check on the numerical solution.

The first starts from the balance equation (\ref{IntEq})
which characterises stationary thresholds. We used the change of variable
$u=-\log(1-t)$ to
 absorb the logarithmic singularity. The resulting stable  fixed-point problem was solved numerically by damped iteration using high-precision arithmetic.

The second approach exploited the direct Ritz method in the approximation class
\[f(t)=-\log(1-t)+q_0  + h(t),\]
where \(h\) is a low-degree polynomial with no constant term, augmented by a few logarithmic correction terms. The best-choice probability \(J\) is then maximised directly over the coefficients. 
For a quadratic \(h\), all integrals entering the objective can be evaluated explicitly. The
single-parameter fit $h\equiv0$ gives $q_0=0.881$; the quadratic $h(t)=a_1t+a_2t^2$ gives
$(q_0,a_1,a_2)=(0.925,-0.150,0.000)$, so that $f(0)=q_0$ recovers $b^*$ to within $3\cdot10^{-3}$ and the Ritz value agrees
with $v$ to five significant figures. Being at a maximum, the value is accurate to the square of the curve
error, so a residual of order $10^{-6}$ from a fast solve yields $v$ to order $10^{-12}$.

\begin{figure}[t]\centering
\begin{tikzpicture}
\begin{axis}[width=.82\textwidth,height=.5\textwidth,xlabel=$t$,ylabel=$f(t)$,xmin=0,xmax=1,ymin=0,ymax=5,
legend pos=north west,legend cell align=left,tick align=outside,every axis plot/.append style={thick}]
\addplot[black] coordinates {(0.0000,0.9225)(0.0238,0.9426)(0.0476,0.9633)(0.0713,0.9847)(0.0951,1.0068)(0.1189,1.0295)(0.1427,1.0530)(0.1665,1.0773)(0.1902,1.1025)(0.2140,1.1286)(0.2378,1.1556)(0.2616,1.1836)(0.2854,1.2127)(0.3091,1.2430)(0.3329,1.2745)(0.3567,1.3073)(0.3805,1.3415)(0.4043,1.3774)(0.4280,1.4148)(0.4518,1.4541)(0.4756,1.4954)(0.4994,1.5389)(0.5232,1.5847)(0.5470,1.6332)(0.5707,1.6846)(0.5945,1.7392)(0.6183,1.7975)(0.6421,1.8599)(0.6659,1.9269)(0.6896,1.9994)(0.7134,2.0781)(0.7372,2.1641)(0.7610,2.2587)(0.7848,2.3639)(0.8085,2.4820)(0.8323,2.6164)(0.8561,2.7722)(0.8799,2.9569)(0.9037,3.1830)(0.9274,3.4741)(0.9512,3.8820)(0.9750,4.5663)};
\addplot[black,dashed,domain=0:0.839,samples=120] {0.804352/(1-x)};
\addplot[black,dotted,domain=0:1] {1.502861};
\legend{ML $f^*$,\ FI $c_0/(1-t)$,\ single level $c_1$}
\end{axis}
\end{tikzpicture}
\caption{Optimal ML threshold $f^*$ (solid) versus the FI hyperbola $\widehat{f}^*(t)=c_0/(1-t)$ (dashed) and the optimal single level $c_1$ (dotted). The ML  singularity  at $t\to 1$ is logarithmic, $f^*(t)\sim-\log(1-t)+q$.}\end{figure}

\begin{figure}[t]\centering
\begin{tikzpicture}
\begin{groupplot}[group style={group size=1 by 2,vertical sep=1.1cm},width=.8\textwidth,height=.42\textwidth,xmin=0,xmax=1,tick align=outside,every axis plot/.append style={thick}]
\nextgroupplot[ylabel=$f(t)$,ymin=0,ymax=5,legend pos=north west,legend cell align=left]
\addplot[black] coordinates {(0.0000,0.9225)(0.0238,0.9426)(0.0476,0.9633)(0.0713,0.9847)(0.0951,1.0068)(0.1189,1.0295)(0.1427,1.0530)(0.1665,1.0773)(0.1902,1.1025)(0.2140,1.1286)(0.2378,1.1556)(0.2616,1.1836)(0.2854,1.2127)(0.3091,1.2430)(0.3329,1.2745)(0.3567,1.3073)(0.3805,1.3415)(0.4043,1.3774)(0.4280,1.4148)(0.4518,1.4541)(0.4756,1.4954)(0.4994,1.5389)(0.5232,1.5847)(0.5470,1.6332)(0.5707,1.6846)(0.5945,1.7392)(0.6183,1.7975)(0.6421,1.8599)(0.6659,1.9269)(0.6896,1.9994)(0.7134,2.0781)(0.7372,2.1641)(0.7610,2.2587)(0.7848,2.3639)(0.8085,2.4820)(0.8323,2.6164)(0.8561,2.7722)(0.8799,2.9569)(0.9037,3.1830)(0.9274,3.4741)(0.9512,3.8820)(0.9750,4.5663)};
\addplot[black,dashed,domain=0.001:0.985,samples=120] {-ln(1-x)+0.882};
\addplot[black,dotted,domain=0:0.839,samples=120] {0.804352/(1-x)};
\legend{solved $f^*$,\ Ritz $-\log(1-t)+c$,\ FI $c_0/(1-t)$}
\nextgroupplot[xlabel=$t$,ylabel={$e^{-f}-J(f;t)$},ymin=-0.012,ymax=0.036]
\addplot[black] coordinates {(0.0001,0.0302)(0.0248,0.0263)(0.0495,0.0227)(0.0742,0.0193)(0.0988,0.0162)(0.1260,0.0130)(0.1507,0.0103)(0.1754,0.0078)(0.2000,0.0055)(0.2272,0.0031)(0.2519,0.0013)(0.2766,-0.0005)(0.3012,-0.0020)(0.3284,-0.0035)(0.3531,-0.0046)(0.3778,-0.0056)(0.4025,-0.0065)(0.4271,-0.0072)(0.4543,-0.0078)(0.4790,-0.0082)(0.5037,-0.0084)(0.5283,-0.0085)(0.5555,-0.0086)(0.5802,-0.0085)(0.6049,-0.0082)(0.6295,-0.0079)(0.6567,-0.0075)(0.6814,-0.0070)(0.7061,-0.0064)(0.7308,-0.0058)(0.7554,-0.0052)(0.7826,-0.0044)(0.8073,-0.0037)(0.8320,-0.0030)(0.8566,-0.0022)(0.8838,-0.0015)(0.9085,-0.0009)(0.9332,-0.0004)(0.9578,0.0000)(0.9850,0.0001)};
\draw[gray,thin] (axis cs:0,0)--(axis cs:1,0);
\end{groupplot}\end{tikzpicture}
\caption{Top: fixed-point  approximate solution of the balance equation, compared with a one-parameter Ritz approximation and the FI benchmark. Bottom: the balance residual
 $\Delta(t)=e^{-f(t)}-J(f;t)$ of the one-parameter Ritz curve $f(t)=-\log(1-t)+0.882$, at the one-percent level. The residual returns to $0$ at the horizon, as it must for a curve carrying the asymptotics of {\rm (\ref{term})}.}
\end{figure}

\subsection{Simulating the stopping rule}

A direct Monte-Carlo simulation of the discretised  memoryless rule driven by a
near-optimal curve $f=-\log(1-t)+0.882$, independently  of other methods, reproduces the
same numbers. Draws are  $n$ i.i.d. uniforms, the rule accepts the first score $k$ below $n^{-1}f(k/n)$.
At $n=2000$ over $3.2\times10^5$ trials, 
 we obtained approximate numbers
\[
J(f)=0.5626,\quad \me[\tau]=0.5612,\quad \prob[\tau=1]=0.152,\quad \prob[\tau\geq T, M>f(T)]=0.206 ,
\]
matching $v=0.562$ and
 the key identity $v=\me[\tau^*]$.   This also agrees with the no-choice probability
$e^{-F^*(1)}=0.155$,
 and the `false negatives' probability  of rejecting the true minimum observed above the threshold,
$$
{\mathbb P}[{\rm FN}]:=  \prob[\tau^*\geq T, M>{f}^*(T)]=
\int_0^1 e^{-f^*(t)}{\rm d}t=0.208,
$$
all precise to the  sampling accuracy.
The complementary `false positives' probability is that of stopping, at a score the rule accepts,
before the overall minimum arrives,
$$
{\mathbb P}[{\rm FP}]:=\prob[M\leq f^*(T),\,\tau^*<T]=1-v-{\mathbb P}[{\rm FN}]=0.229,
$$
so that the three outcomes $\{\tau^*=T\}$, ${\rm FP}$ and ${\rm FN}$ are exclusive and exhaustive.



\subsection{Verifying the identities}

Every identity in the memoryless sections is verified numerically on the approximate threshold curve.
\[
\begin{array}{ll}
\text{stationarity } e^{-f(t)}=J(f;t) & \text{residual }<10^{-3}\text{ (resolution)}\\[3pt]
\text{key identity } J=\me\,\tau & 0.562055114=0.562055114\\[3pt]
\text{sum rule } \int_0^1 f\,w\,dt=1 & \text{holds to }10^{-14}\text{ in collocation}\\[3pt]
\text{outcome split } v+\prob(\text{FP})+\prob(\text{FN})=1 & 0.562+0.230+0.208\\[3pt]
\text{false negative } \prob(\text{FN})=\int_0^1 e^{-f}{\rm d}t & 0.208\\[3pt]
\text{horizon order } e^{F(t)}J(f;t)\asymp 1-t & e^{-(f-F)}/(1-t)\to2.58929=\int_0^\infty\! e^{-G}\\[3pt]
\text{terminal rate } w(1)=e^{-q} & 0.40248777
\end{array}
\]

\subsection{Precision}
Since $v$ is the supremum of the winning probability over admissible threshold curves, the winning
probability of any explicit curve is a rigorous lower bound for $v$. Evaluating $J$ on the quadratic Ritz
curve of the previous section by high-precision quadrature (the result stable at $35$ and $50$ working digits)
gives
\[
v\ge 0.5620551137318 .
\]
The finite-$n$ ML optima decrease to $v$, so $v\le v_n$; solving $n=200$ gives $v\le v_{200}=0.5635562$,
rigorous but loose. The gap above the floor is below $10^{-9}$: the horizon expansion, the collocation on an
independent grid, and the log-aware finite-$n$ extrapolation agree to nine figures, $v=0.562055114$. The tenth
figure is not claimed, the horizon logarithm limiting both the finite-$n$ convergence and any upper bound near
the floor. The value has no known closed form.

\subsection{The discrete-time asymptotics}

Depoissonisation of the PPP coupling leads to approximations of the best-choice problems with finitely many 
observations:
\begin{eqnarray*}
\text{FI:}\quad
\widehat{v}_n-\widehat{v}&=& \frac{1-e^{-c_0}}{2} \frac{1}{n}+ O\left(\frac{1}{n^2}\right),  \\
\text{ML:}\quad
{v}_n-v &=& \frac{1-e^{-q}}{2} \frac{1}{n}+ O\left(\frac{\log n}{n^2}\right).
\end{eqnarray*}
The FI constant $(1-e^{-c_0})/2= 0.276311$  improves upon an earlier estimate in \cite{GnedinFI}.
The ML constant $(1-e^{-q})/2 = 0.29875$ is close to Enns' $0.30127$ 
(see \cite{Enns}, Equation 2.5).  Finer asymptotic expansions will appear elsewhere.

\subsection{Summary of constants}

\[
\begin{array}{ll}
c_0=0.80435226 & \text{the initial FI threshold,~} e^{-c_0}=g_0(c_0)\\[2pt]
c_1=1.50286101 & \argmax g_0, {\rm~ the~ optimal~ single~ level}\\[2pt]
g_0(c_1)=0.51735143 & \text{the~optimal~single-level probability}\\[2pt]
b^*=f^*(0)=0.92252703 & \text{the ML initial threshold}\\[2pt]
q=0.91009055 & \text{the ML horizon constant, } \lim_{t\to 1}\bigl(f^*(t)+\log(1-t)\bigr)=q\\[2pt]
e^{-q}=0.40248777 & \text{the ML terminal winning rate } w^*(1)\\[2pt]
v=0.562055114 & \text{the ML optimal value }\\[2pt]
\widehat v=0.58016422 & \text{the FI optimal value}\\[2pt]
0.19950520 & \text{the FI  no-choice probability}\\[2pt]
0.15544350 & \text{the ML no-choice probability } {\mathbb P}[\tau^*=1]=e^{-F^*(1)}
\end{array}
\]

\end{document}